\documentclass{cimart}

\usepackage[shortlabels]{enumitem}
\usepackage{tikz}

\title{Quasi-isometric modification of Gromov-Hausdorff distance}

\authors{Alexei Naianzin}

\authorinfo{V. A. Trapeznikov Institute of Control Sciences of RAS and Lomonosov Moscow State University, Russia}{naianzin.av@phystech.edu}

\abstract{We define a distance analogous to the Gromov–Hausdorff distance that enables the comparison of arbitrary quasi-isometric spaces. We also investigate properties preserved under limits with respect to this distance, as well as properties of the entire class of metric spaces equipped with this distance. For this purpose, we introduce the notion of quasi-isometric distortion for correspondences. Using this notion, we prove that the class of all metric spaces is path-connected; in fact, any two metric spaces can be connected by a curve of finite length.}

\keywords{Gromov-Hausdorff distance, Quasi-isometry, Coarse structure.}
\msc{53C23; 54E35}
\VOLUME{34}
\ISSUE{1}
\NUMBER{8}
\DOI{https://doi.org/10.46298/cm.17523}
\editinfo{February 16, 2026}{May 29, 2026}{Lenny Fukshansky}

\acknowledgments{This work was supported by the Russian Science Foundation under grant no. 25-11-00018. The author is deeply grateful to Andronick Arutyunov for his invaluable guidance, continuous support, and insightful discussions throughout this work. The author also thanks the participants of the seminar on non-commutative geometry and topology and the seminar on the theory of extreme nets for their helpful comments and stimulating conversations. The author is grateful to the anonymous referee for valuable comments and suggestions.}
\begin{document}

\section{Introduction}\label{sec:intro}

The Gromov–Hausdorff distance defines a metric on the set of equivalence classes of compact metric spaces, providing a way to measure the resemblance between metric spaces. 

Recent research has extensively developed the Gromov–Hausdorff framework, including computations of the distance for specific metric spaces and estimates in various general settings \cite{lim2023gromov, adams2025hausdorff, mikhailov2025calculating, mikhailov2025gromov}, the study of the geometry of the space of metric spaces endowed with this distance \cite{borzov2022geometry, bogataya2023clouds}, as well as investigations of limit-preserving properties and compactifications \cite{liu2022gromov, gomez2024gromov}. In addition, numerous modifications of the Gromov–Hausdorff distance have been proposed, often in situations where metric spaces carry additional structure \cite{Khezeli_2020, khezeli2023unified, minguzzi2024lorentzian, bogaty2025fundamentals}. 
Connections between quasi-isometric invariants of groups and their algebraic properties, such as those studied via derivations in group algebras \cite{arutyunov2023derivations}, also motivate further investigation.

While for non-compact metric spaces, the Gromov-Hausdorff distances are often infinite, the convergence of pointed metric spaces doesn't preserve the coarse structure.  So in this paper, we introduce a distance based on the notion of quasi-isometry. By “distance” we mean a symmetric function that does not necessarily satisfy the triangle inequality. The constructed distance allows one to compare arbitrary quasi-isometric spaces. Convergence with respect to this distance will be called \emph{quasi-isometric convergence}.

We show that Gromov–Hausdorff convergence implies quasi-isometric convergence, and that quasi-isometric convergence implies pointed Gromov–Hausdorff convergence, i.e.,
$$ GH \quad \Rightarrow \quad QI \quad \Rightarrow \quad PGH.$$
Precise formulations are provided in Corollary~\ref{cor:GH-->QI} and Proposition~\ref{prop:QI-->PGH}. Consequently, we show that several important properties are preserved under limits with respect to the quasi-isometric distance. These include separability, properness, the intrinsic metric property, and several others; see Corollary~\ref{cor:QI-inherited-properties}.

\subsection{Main results}

Our first main result concerns the behavior of metric properties under quasi-isometric convergence. We show that several natural properties are preserved under limits in the quasi-isometric distance. More precisely, if 
\( X_k \xrightarrow{\rm q.i.} Y \),
then several metric properties of the spaces \(X_k\) are inherited by the limit space \(Y\). These include total boundedness, boundedness of the diameter (with convergence of diameters), separability, properness (under the assumption that \(Y\) is complete), the intrinsic metric property, geodesicity and properness (again under completeness of \(Y\)), as well as \(\delta\)-hyperbolicity and the \( \mathrm{CAT}^\kappa \) condition; see Corollary~\ref{cor:QI-inherited-properties}.

Our second main result shows that the quasi-isometric distance is topologically equivalent to an actual metric. We construct a metric \(D\) (Proposition~\ref{prop:D is a metric}) on the class of metric spaces and prove that it induces the same topology and the same coarse structure as the quasi-isometric distance \(\hat d\). In particular, we prove the following mutual estimates (Proposition~\ref{prop:metric-correspondence}):
\[
\hat d(X,Y)=r \;\Rightarrow\; D(X,Y)\le \ln(1+2r), \qquad
D(X,Y)=r \;\Rightarrow\; \hat d(X,Y)\le e^{2r}-e^{r}.
\]

Finally, we obtain structural results for the space of all metric spaces endowed with the metric $D$. In particular, we prove the following theorem concerning continuous deformations.

{\bf Theorem~\ref{th:Rt-curve}.}{ \it 
Let $X$ and $Y$ be metric spaces, and let $R$ be a correspondence between $X$ and $Y$ with $\operatorname{q-dis}R \le r.$ Then the map $t \mapsto R_t$ is a continuous path in $(\mathfrak{M}, D)$ connecting $X$ and $Y,$ and its length does not exceed $e^{2r} - e^{r}.$ }

Here $\mathfrak{M}$ is the set of equivalence classes of separable metric 
spaces under the equivalence relation $X \sim Y \iff \hat{d}(X, Y) = 0.$ A direct corollary is that the space ($\mathfrak{M}, D$) is linearly connected. Furthermore, the specific construction provided by the theorem shows that its coarse structure is monogenic.

\subsection{Structure of the paper}
In Section~\ref{sec:main-definitions} we recall the basic definitions and formulate the facts needed later concerning Gromov–Hausdorff convergence and convergence of pointed spaces. In Section~\ref{sec:qi-distance} we introduce the quasi-isometric distance, establish some of its properties, and study its relation to ordinary and pointed Gromov–Hausdorff convergence. In particular, we show that several properties are inherited by limits with respect to the quasi-isometric distance. Section~\ref{sec:topol and coarse structure} is devoted to the topology and coarse structure induced by the quasi-isometric distance. In Section~\ref{sec:correspondance-metrization} we introduce the notion of quasi-isometric distortion and construct a generalized metric that induces the same topology and coarse structure as the quasi-isometric distance.

\section{Gromov–Hausdorff distance and basic definitions}
\label{sec:main-definitions}

We first recall the classical definition of the Gromov–Hausdorff distance, list some of its properties and equivalent formulations, and then proceed to the definition of the quasi-isometric distance.

The Hausdorff distance measures how close two subsets of a given metric space are.

\begin{definition}
Let $A$ and $B$ be nonempty subsets of a metric space $X$. The \emph{Hausdorff distance} between $A$ and $B$ is defined by
$$d_H(A, B) = \inf \{r > 0 \mid A \subseteq U_r(B), \, B \subseteq U_r(A)\}, $$
$$U_r(A) = \{x\in X \mid \exists\, a\in A : d(x,a) < r \}. $$
\end{definition}

This notion allows one to define a (possibly infinite) distance between arbitrary metric spaces. The following distance was first introduced by Edwards in 1975 and later independently by Gromov in 1981. A detailed account of its history may be found in \cite{tuzhilin2016invented}. We follow the notation of \cite{burago2001course}.

\begin{definition}
The \emph{Gromov–Hausdorff distance} between nonempty metric spaces $X$ and $Y$ is defined as
\begin{equation} d_{GH}(X, Y)=\inf \left\{d_H\left(X^{\prime}, Y^{\prime}\right) \mid X^{\prime}, Y^{\prime} \subset Z, X^{\prime} \approx X, Y^{\prime} \approx Y\right\} \end{equation}
where the infimum is taken over all metric spaces $Z$ and isometric embeddings of $X$ and $Y$ into $Z$.
\end{definition}

\begin{theorem}
Let $X, Y, Z$ be arbitrary metric spaces. Then
\begin{enumerate}[label=\normalfont\arabic*.]
    \item $d_{GH}(X, Y) = d_{GH}(Y, X);$
    \item $d_{GH}(X, Z) \le d_{GH}(X, Y) + d_{GH}(Y, Z).$
\end{enumerate}
\end{theorem}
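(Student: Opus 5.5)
Symmetry is immediate from the definition: the set over which the infimum is taken, namely triples $(Z, X', Y')$ with $X'\approx X$, $Y'\approx Y$ isometrically embedded in $Z$, is symmetric in the roles of $X$ and $Y$. The Hausdorff distance satisfies $d_H(A,B)=d_H(B,A)$, since its defining condition $A\subseteq U_r(B)$, $B\subseteq U_r(A)$ is symmetric. So the two infima coincide.

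For the triangle inequality I would use the standard gluing construction. Assume both terms on the right are finite, since otherwise there is nothing to prove. Fix $\varepsilon>0$. Choose a space $Z_1$ containing isometric copies of $X$ and $Y$ with $d_H(X,Y)<d_{GH}(X,Y)+\varepsilon$ there. Likewise choose $Z_2$ containing copies of $Y$ and $Z$ with $d_H(Y,Z)<d_{GH}(Y,Z)+\varepsilon$. We may shrink to $Z_1=X\sqcup Y$ and $Z_2=Y\sqcup Z$, possibly with semimetrics, where we identify points at zero distance at the end.

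Next, glue $Z_1$ and $Z_2$ along the two copies of $Y$. On $W=X\sqcup Y\sqcup Z$ set $d_W$ to agree with $d_{Z_1}$ on $X\sqcup Y$ and with $d_{Z_2}$ on $Y\sqcup Z$. For $x\in X$ and $z\in Z$ put
\[ d_W(x,z)=\inf_{y\in Y}\bigl(d_{Z_1}(x,y)+d_{Z_2}(y,z)\bigr). \]

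The main (though routine) obstacle is checking that $d_W$ is a semimetric and that it restricts to the original metrics on $X$, $Y$ and $Z$. For the restrictions, the metrics of $X,Y,Z$ are unchanged by construction, and the formula only adds cross distances. For the triangle inequality, one runs through cases according to which pieces the three points lie in. Each case reduces to the triangle inequality in $Z_1$ or $Z_2$ combined with taking the infimum over the intermediate point $y$. A typical case is
\[ d(x,z)\le d(x,y_0)+d(y_0,y)+d(y,z), \]
obtained by taking the infimum over $y$. After quotienting the zero-distance pairs we get a genuine metric space $W$ into which $X$ and $Z$ embed isometrically.

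Finally, take $r_1=d_H(X,Y)$ measured in $Z_1$ (plus a small slack) and $r_2$ the analogous quantity for $Y,Z$ in $Z_2$. For any $x\in X$ there is $y$ with $d(x,y)<r_1$, and then $z$ with $d(y,z)<r_2$. Hence $d_W(x,z)<r_1+r_2$, and symmetrically in the other direction. This gives $d_H(X,Z)\le r_1+r_2$ in $W$, so
\[ d_{GH}(X,Z)\le d_{GH}(X,Y)+d_{GH}(Y,Z)+2\varepsilon. \]
Letting $\varepsilon\to0$ finishes the proof.
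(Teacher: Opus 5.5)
Your proof is correct. Symmetry follows from the symmetry of $d_H$, and the triangle inequality comes from gluing $X\sqcup Y$ and $Y\sqcup Z$ along $Y$ via $d(x,z)=\inf_y\bigl(d(x,y)+d(y,z)\bigr)$ and chaining the two Hausdorff approximations. The paper gives no proof of its own and quotes this as a standard fact from Burago--Burago--Ivanov, where essentially this gluing argument is the standard proof.
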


Note that reflexivity does not hold, even after passing to the quotient by the relation of isometry.

\begin{example}
Let $\overline{X}$ be the completion of a metric space $X$. Then $d_{GH}(X, \overline{X}) = 0.$
\end{example}

However, on the set of equivalence classes of \emph{compact} metric spaces modulo isometry, the Gromov–Hausdorff distance indeed defines a metric \cite[Theorem 7.3.30]{burago2001course}.

\subsection{Reformulations}
We now present several equivalent ways to compute or estimate the Gromov–Hausdorff distance. For the first of them, we need the notion of a correspondence. Almost all of the definitions below are taken from Sections~7.4 and~8.1 of \cite{burago2001course}.

\begin{definition}
Let $X$ and $Y$ be two sets. A \emph{correspondence} between $X$ and $Y$ is a subset $R \subseteq X \times Y$ satisfying the following condition: for every point $x \in X$ there exists at least one point $y \in Y$ such that $(x,y)\in R$, and similarly, for every point $y \in Y$ there exists a point $x \in X$ such that $(x,y)\in R$. \label{def:distortion}
\end{definition}

A correspondence can be viewed as a surjective multivalued map from $X$ to $Y$, associating to each $x\in X$ the set $\{ y \mid (x,y)\in R \}$.

\begin{definition}
Let $R$ be a correspondence between metric spaces $X$ and $Y$. Its \emph{distortion} $\operatorname{dis} R$ is defined by
\begin{equation}
\operatorname{dis} R=\sup \left\{\left|d_X\left(x, x^{\prime}\right)-d_Y\left(y, y^{\prime}\right)\right|  \mid (x, y),\left(x^{\prime}, y^{\prime}\right) \in R\right\}
\end{equation}
where $d_X$ and $d_Y$ are the metrics on $X$ and $Y$, respectively.

For a map $f\colon X \to Y$ the \emph{distortion} is defined by
\begin{equation}
\operatorname{dis} f = \sup \left\{\left|d_X\left(x, x^{\prime}\right)-d_Y\left(f(x), f(x^{\prime})\right)\right|  \mid x, x' \in X \right\}.
\end{equation}
\end{definition}

\begin{theorem}[\!\!{\cite[Theorem 7.3.25]{burago2001course}}]
For any metric spaces $X$ and $Y$ one has
\begin{equation}
d_{GH}(X, Y)=\frac{1}{2} \inf _{R}(\operatorname{dis} R).
\end{equation}
where the infimum is taken over all correspondences $R$ between $X$ and $Y$.
\end{theorem}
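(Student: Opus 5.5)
We prove the two inequalities $d_{GH}(X,Y)\le \frac12\inf_R \operatorname{dis}R$ and $\frac12\inf_R\operatorname{dis}R\le d_{GH}(X,Y)$ separately. In each direction we convert one kind of data into the other with only an arbitrarily small loss. Throughout we may assume the relevant quantity is finite, since otherwise the corresponding inequality is trivial.

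\emph{From embeddings to correspondences.} Suppose $d_{GH}(X,Y)<r$. Then there exist a metric space $Z$ and isometric copies $X',Y'\subset Z$ with $d_H(X',Y')<r$. Define
\[
R=\{(x,y)\in X\times Y \mid d_Z(x,y)<r\},
\]
where we identify $X$ and $Y$ with their copies $X'$ and $Y'$. Because $X'\subseteq U_r(Y')$ and $Y'\subseteq U_r(X')$, every point of either space has a partner, so $R$ is a correspondence. For $(x,y),(x',y')\in R$ the triangle inequality in $Z$ gives
\[
|d(x,x')-d(y,y')|\le d(x,y)+d(x',y')<2r .
\]
Hence $\operatorname{dis}R\le 2r$. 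Letting $r\downarrow d_{GH}(X,Y)$ yields $\frac12\inf_R\operatorname{dis}R\le d_{GH}(X,Y)$.

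\emph{From correspondences to embeddings.} This is the main step. Let $R$ be a correspondence with $\operatorname{dis}R=2r<\infty$. On the disjoint union $Z=X\sqcup Y$ keep $d_X$ and $d_Y$ on each piece, and for $x\in X$, $y\in Y$ set
\[
d_Z(x,y)=\inf_{(x',y')\in R}\bigl(d_X(x,x')+r+d_Y(y',y)\bigr).
\]
If $r=0$, replace $r$ by a small $\varepsilon>0$ so that distinct points of $X$ and $Y$ stay at positive distance; the error is removed at the end by letting $\varepsilon\to0$. Symmetry and positivity are then immediate, and mixed triangle inequalities of the type $x,x_1\in X$, $y\in Y$ follow directly by absorbing $d_X(x,x_1)$ into the infimum.

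The only substantive case has two points in the same space and the third in the other, for example
\[
d_X(x_1,x_2)\le d_Z(x_1,y)+d_Z(y,x_2).
\]
Unfolding the infima produces pairs $(x_1',y_1'),(x_2',y_2')\in R$, and we need
\[
d_X(x_1',x_2')\le d_Y(y_1',y_2')+2r .
\]
This is exactly the distortion bound. The symmetric case with two points of $Y$ and one of $X$ is identical.

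Finally, for each $(x,y)\in R$ we have $d_Z(x,y)\le r$, so $d_H(X,Y)\le r$ in $Z$ up to the strict-inequality convention in $U_r$, which costs only an arbitrary $\varepsilon$. This gives $d_{GH}(X,Y)\le \frac12\operatorname{dis}R$, and taking the infimum over $R$ proves the reverse inequality.
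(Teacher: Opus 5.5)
Your proof is correct. The paper does not prove this theorem; it only cites Burago--Burago--Ivanov, Theorem 7.3.25, and your argument is essentially the standard proof from that source: the relation $\{(x,y) : d_Z(x,y)<r\}$ gives one inequality, and the glued metric $d_X(x,x')+r+d_Y(y',y)$ on $X\sqcup Y$ gives the other. The only cosmetic difference is that you handle $\operatorname{dis}R=0$ by shifting $r$ to some $\varepsilon>0$, whereas that source instead allows a semi-metric on the disjoint union.
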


\begin{example} 
A pair of maps $\phi \colon X \to Y$ and $\psi \colon Y \to X$ determines a correspondence 
$R_{\phi \psi} = R_\phi \cup R_\psi$, where $R_\phi, R_\psi \subseteq X\times Y$ are the graphs of the maps $\phi$ and $\psi$.
\end{example}

\begin{proposition}
The following equalities hold:
\[
 \operatorname{dis}  R_{\varphi, \psi}
 =
 \max \left\{
 \operatorname{dis}  R_{\varphi},
 \operatorname{dis}  R_\psi,
 C_{\varphi, \psi}
 \right\},
 \text{ where } 
 C_{\varphi, \psi}
 =
 \sup _{x \in X, y \in Y}
 \left|
 d_X(x, \psi(y)) - d_Y(\varphi(x), y)
 \right|;
\]
\[
    d_{GH}(X, Y)
    =
    \frac{1}{2}
    \inf _{\varphi, \psi}
    \max \left\{
    \operatorname{dis} R_{\varphi},
    \operatorname{dis} R_\psi,
    C_{\varphi, \psi}
    \right\}.
\]
\end{proposition}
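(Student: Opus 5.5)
The first equality follows by sorting pairs of points of $R_{\varphi,\psi}$ by which graph they come from. The second then follows from the correspondence formula for $d_{GH}$ (Theorem 7.3.25) once we show that correspondences of the form $R_{\varphi,\psi}$ realize the infimum.

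For the first equality, take two pairs $(x,y),(x',y')\in R_{\varphi,\psi}=R_\varphi\cup R_\psi$; each lies in $R_\varphi$ or in $R_\psi$. There are three cases.
\begin{enumerate}[label=\normalfont(\roman*)]
\item Both pairs lie in $R_\varphi$. Then $y=\varphi(x)$, $y'=\varphi(x')$, and the quantity $|d_X(x,x')-d_Y(y,y')|$ is one of the terms defining $\operatorname{dis}R_\varphi$.
\item Both pairs lie in $R_\psi$. This is symmetric and gives the terms of $\operatorname{dis}R_\psi$.
\item $(x,\varphi(x))\in R_\varphi$ and $(\psi(y'),y')\in R_\psi$. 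Then the difference is $|d_X(x,\psi(y'))-d_Y(\varphi(x),y')|$, which is exactly a term of $C_{\varphi,\psi}$.
\end{enumerate}
The case with the roles of the two pairs swapped is the same by symmetry of the absolute value. Conversely, every term of each of the three suprema arises from some pair of points of $R_{\varphi,\psi}$. So the supremum over $R_{\varphi,\psi}$ equals the maximum of the three suprema. Note also that $R_{\varphi,\psi}$ is genuinely a correspondence: every $x$ is covered by $R_\varphi$ and every $y$ by $R_\psi$.

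For the second equality, the inequality ``$\le$'' is immediate. Each $R_{\varphi,\psi}$ is a correspondence, so Theorem 7.3.25 together with the first equality gives $2d_{GH}(X,Y)\le\operatorname{dis}R_{\varphi,\psi}$.

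For ``$\ge$'', start from an arbitrary correspondence $R$ and use the axiom of choice to pick $\varphi(x)$ with $(x,\varphi(x))\in R$ and $\psi(y)$ with $(\psi(y),y)\in R$. Then $R_{\varphi,\psi}\subseteq R$. Since distortion is a supremum over pairs of elements, it is monotone under inclusion, so $\operatorname{dis}R_{\varphi,\psi}\le\operatorname{dis}R$. Taking the infimum over $R$ and applying Theorem 7.3.25 gives the reverse inequality.

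There is no real obstacle here. The only points needing care are checking that the case analysis covers every pair of points of $R_{\varphi,\psi}$, and observing that the monotonicity of $\operatorname{dis}$ under inclusion of correspondences is what lets the infimum be restricted to correspondences of the form $R_{\varphi,\psi}$. The equalities also hold when some of the quantities are infinite, since the whole argument works with suprema in $[0,\infty]$.
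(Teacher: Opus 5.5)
Your proof is correct. The paper omits the proof as straightforward, and your argument is the routine one it has in mind: the case split over pairs in $R_\varphi\cup R_\psi$ gives the first identity. For the second, choosing sections $\varphi,\psi$ of an arbitrary correspondence $R$ gives $R_{\varphi,\psi}\subseteq R$, and monotonicity of $\operatorname{dis}$ together with Theorem 7.3.25 finishes it.
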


This proposition was formulated in \cite{memoli2012some, oles2024efficient}; we omit the straightforward proof. 

One way to obtain estimates for the Gromov–Hausdorff distance is through the notion of an $\varepsilon$-isometry. This concept also allows one to define Gromov–Hausdorff convergence for pointed non-compact metric spaces. 

\begin{definition}
Let $X$ and $Y$ be two metric spaces and let $\varepsilon > 0$. A (possibly discontinuous) map $f\colon X\to Y$ is called an \emph{$\varepsilon$-isometry} if $\operatorname{dis} f \le \varepsilon$ and the image $f(X)$ is an $\varepsilon$-net in $Y$.
\end{definition}

\begin{theorem}[\!\!{\cite[Corollary 7.3.28]{burago2001course}}] \label{th:eps-iso eq gh dist}
Let $X$ and $Y$ be metric spaces and let $\varepsilon>0$. Then:
\begin{enumerate}[\normalfont 1.]
\item if $d_{GH}(X,Y)<\varepsilon$, then there exists a $2\varepsilon$-isometry from $X$ to $Y$;
\item if there exists an $\varepsilon$-isometry from $X$ to $Y$, then $d_{GH}(X,Y)<2\varepsilon$. 
\end{enumerate}
\end{theorem}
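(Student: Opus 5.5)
Both parts go through the correspondence characterization $d_{GH}(X,Y)=\frac12\inf_R \operatorname{dis} R$ (Theorem 7.3.25 of Burago–Burago–Ivanov as quoted above), translating between correspondences and $\varepsilon$-isometries.

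\emph{Part 1.} Suppose $d_{GH}(X,Y)<\varepsilon$. Then there is a correspondence $R$ with $\operatorname{dis} R<2\varepsilon$. For each $x\in X$ choose $f(x)\in Y$ with $(x,f(x))\in R$; this uses the axiom of choice, since $R$ projects onto $X$. Because the pairs $(x,f(x))$ and $(x',f(x'))$ both lie in $R$, we get $|d_X(x,x')-d_Y(f(x),f(x'))|\le \operatorname{dis}R<2\varepsilon$, so $\operatorname{dis} f\le 2\varepsilon$. For the net condition, take $y\in Y$. Since $R$ projects onto $Y$, there is $x$ with $(x,y)\in R$. Comparing the pairs $(x,y)$ and $(x,f(x))$ gives
\[
d_Y(y,f(x))=|d_X(x,x)-d_Y(y,f(x))|\le\operatorname{dis}R<2\varepsilon .
\]
Hence $f(X)$ is a $2\varepsilon$-net, and $f$ is a $2\varepsilon$-isometry.

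\emph{Part 2.} Let $f$ be an $\varepsilon$-isometry. Define
\[
R=\{(x,y)\in X\times Y : d_Y(y,f(x))\le\varepsilon\}.
\]
Each $x$ is related to $f(x)$. Since $f(X)$ is an $\varepsilon$-net, each $y$ has some $x$ with $d_Y(y,f(x))\le\varepsilon$. So $R$ is a correspondence. For $(x,y),(x',y')\in R$, the triangle inequality gives
\[
|d_Y(y,y')-d_Y(f(x),f(x'))|\le 2\varepsilon ,
\]
and $|d_Y(f(x),f(x'))-d_X(x,x')|\le\varepsilon$. Therefore $\operatorname{dis}R\le 3\varepsilon$, which yields only $d_{GH}(X,Y)\le \tfrac32\varepsilon$. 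This is strictly less than $2\varepsilon$ because $\varepsilon>0$. The inequality is strict regardless of whether the net condition is defined with $<$ or $\le$.

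The only real obstacle is this strictness bookkeeping, and the slack between $\tfrac32\varepsilon$ and $2\varepsilon$ removes it.
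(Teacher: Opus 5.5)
Your argument is correct and is essentially the standard proof from Burago--Burago--Ivanov, which the paper cites without reproducing. In part 1 you extract $f$ from a correspondence with $\operatorname{dis}R<2\varepsilon$, and in part 2 you use the correspondence $\{(x,y): d_Y(y,f(x))\le\varepsilon\}$, whose distortion is at most $3\varepsilon$.

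One correction to your closing remark: if an $\varepsilon$-net is defined, as in that book, by $\inf_{x}d_Y(y,f(x))\le\varepsilon$, the infimum need not be attained, so your $R$ may fail to be a correspondence. The slack you identified fixes this: take a threshold $\varepsilon'\in(\varepsilon,\frac32\varepsilon)$ instead of $\varepsilon$, which gives $d_{GH}(X,Y)\le\frac{\varepsilon}{2}+\varepsilon'<2\varepsilon$.
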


\subsection{Properties inherited by limits}
We will write $X_n \xrightarrow{GH} X$ if $d_{GH}(X_n,X)\to 0$. Note that in the case of non-compact spaces the limit is not unique; for example, there exist complete non-isometric metric spaces at zero Gromov–Hausdorff distance, see \cite{ghanaat_gromov_hausdorff}.

The following properties are preserved under Gromov–Hausdorff convergence; see \cite{ghanaat_gromov_hausdorff} and Section 6.1 of \cite{tuzhilin2020lectures}.

\begin{theorem}\label{th:properties of GH limits}
Let \(X_k \xrightarrow{GH} Y\). If each space \(X_k\) has one of the following properties, then the limit space \(Y\) also has this property (assuming $Y$ is complete, when this is required):
\begin{enumerate}[\normalfont 1.]
    \item total boundedness;
    \item boundedness; moreover, \( \operatorname{diam}(X_k)\to \operatorname{diam}(Y) \);
    \item separability;
    \item properness, provided that \(Y\) is complete;
    \item being a length space (assuming \(Y\) is complete);
    \item being a proper geodesic space (assuming \(Y\) is complete);
    \item \( \delta \)-hyperbolicity;
    \item the \( \mathrm{CAT}^\kappa \) condition.
\end{enumerate}
\end{theorem}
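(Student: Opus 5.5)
The eight items of Theorem~\ref{th:properties of GH limits} are handled separately. The common tool is Theorem~\ref{th:eps-iso eq gh dist}: for $\varepsilon_k := 2d_{GH}(X_k,Y)+1/k \to 0$ we obtain maps $f_k\colon X_k\to Y$ with $\operatorname{dis} f_k\le\varepsilon_k$ and $f_k(X_k)$ an $\varepsilon_k$-net in $Y$. Symmetrically, there are maps $g_k\colon Y\to X_k$ with the same properties. Each property of $Y$ will be derived by transporting finite configurations of points between $Y$ and $X_k$ and letting $k\to\infty$.

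\emph{Items 1--3.} For total boundedness, fix $\varepsilon>0$ and pick $k$ with $\varepsilon_k<\varepsilon/3$. Take a finite $\varepsilon/3$-net $S$ in $X_k$. Then $f_k(S)$ is a finite $\varepsilon$-net in $Y$, since every $y$ is $\varepsilon_k$-close to some $f_k(x)$, the point $x$ is $\varepsilon/3$-close to some $s\in S$, and $d(f_k x,f_k s)\le\varepsilon/3+\varepsilon_k$. Separability is the same argument with a countable dense set: $\bigcup_k f_k(S_k)$ is countable and dense in $Y$. For diameters, $|\operatorname{diam}X-\operatorname{diam}Y|\le 2d_{GH}(X,Y)$ follows directly from the correspondence formula, giving both boundedness and convergence of diameters.

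\emph{Items 4--6.} For properness, $Y$ is complete, so it suffices that every closed ball $\bar B(y,\rho)$ is totally bounded. Pull the ball back by $g_k$: its image lies in the ball $\bar B(g_k y,\rho+\varepsilon_k)$ of $X_k$, which is compact and hence has a finite $\delta$-net. Pushing this net forward by $f_k$ gives a finite $(\delta+3\varepsilon_k)$-net of $\bar B(y,\rho)$.

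For the length property, it suffices by completeness of $Y$ to construct approximate midpoints. Given $y,y'$, map them to $X_k$ by $g_k$ and take an approximate midpoint there. Its image under $f_k$ is an $O(\varepsilon_k)$-midpoint of $y,y'$. Since $\varepsilon_k$ is arbitrary, $Y$ has $\varepsilon$-midpoints for every $\varepsilon$, and a complete space with this property is a length space. A proper length space is geodesic by Hopf--Rinow, so item 6 follows from items 4 and 5.

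\emph{Items 7--8.} Both properties are conditions on finitely many points, namely Gromov products of four points and comparison inequalities for quadruples. Each is stable under perturbation of all pairwise distances by $\varepsilon_k$. Transfer any quadruple of $Y$ to $X_k$ via $g_k$, apply the inequality there, and let $k\to\infty$ to obtain it in $Y$. For $\mathrm{CAT}^\kappa$ one should use the four-point characterisation, valid for geodesic or length spaces.

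The main obstacle is item 8. The $\mathrm{CAT}^\kappa$ condition requires geodesics and a four-point criterion equivalent to it. This is presumably why the statement defers to \cite{ghanaat_gromov_hausdorff} and \cite{tuzhilin2020lectures}, and I would cite those for the precise reduction rather than reprove it.
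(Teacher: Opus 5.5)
\textbf{Gap in items 4 and 5.} These two items contain a step that fails as written. In both you pull points back by $g_k$, work in $X_k$, push forward by $f_k$, and then treat the output as lying near the original points of $Y$. For instance, your bound $\delta+3\varepsilon_k$ in item 4 needs $d(f_kg_ky',y')\le 2\varepsilon_k$.

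But you obtained $f_k$ and $g_k$ independently from Theorem~\ref{th:eps-iso eq gh dist}, and independent $\varepsilon$-isometries give no control of $f_k\circ g_k$ against $\operatorname{id}_Y$. Take $X_k=Y=\mathbb R$, $f_k=\operatorname{id}$, $g_k(t)=-t$. Then your net in item 4 covers a ball around $-y$ rather than around $y$. In item 5, $f_k(m)$ is an approximate midpoint of $-y$ and $-y'$, not of $y$ and $y'$.

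The repair is easy, in either of two ways:
\begin{enumerate}
\item Choose $f_k,g_k$ inside one correspondence $R_k$ with $\operatorname{dis}R_k\le\varepsilon_k$. Then $(g_ky,y),(g_ky,f_kg_ky)\in R_k$ gives $d(y,f_kg_ky)\le\varepsilon_k$.
\item Drop $g_k$ and use $f_k$-preimages. Pick $x_0$ with $d(f_kx_0,y)\le\varepsilon_k$. Every $y'\in\bar B(y,\rho)$ is $\varepsilon_k$-close to some $f_kx'$ with $x'\in\bar B(x_0,\rho+3\varepsilon_k)$. So $f_k$ of a finite $\delta$-net of that compact ball is a $(\delta+2\varepsilon_k)$-net of $\bar B(y,\rho)$. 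Likewise, in item 5 take an approximate midpoint of $f_k$-preimages of $y$ and $y'$.
\end{enumerate}
With this fix, item 6 follows from items 4 and 5 as you say.

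\textbf{The other items.} The paper gives no proof of this theorem; it only cites Ghanaat and Section~6.1 of Tuzhilin's lectures. So there is no internal argument to compare with, and your approach of transporting finite configurations by $\varepsilon$-isometries is the standard one. Items 1--3 and 7 are fine, since each uses a single map ($f_k$ or $g_k$) and only its distortion and net property.

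Deferring item 8 to the literature matches what the paper does. You should note, though, that item 8 also needs $Y$ complete. For example, $\mathbb R^2\setminus\{0\}$ is at Gromov--Hausdorff distance $0$ from $\mathbb R^2$ and satisfies the $\mathrm{CAT}^0$ four-point inequality, yet it is not geodesic. So the transferred four-point inequality alone is not enough. You also need the cited fact that in a complete $Y$ these inequalities force approximate midpoints to converge, which makes $Y$ geodesic.
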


For definitions of delta hyperbolicity and \( \mathrm{CAT}^\kappa \) space, see Def. 1.20 on p. 410 and paragraph 1.1 on p. 158 \cite{bridson2013metric}. 

\subsection{The non-compact case}
We now pass to the definition of Gromov–Hausdorff convergence for pointed spaces.

\begin{definition}
A \emph{pointed metric space} is a pair $(X,p)$ consisting of a metric space $X$ together with a fixed base point $p \in X$.\end{definition}

\begin{definition}
    A sequence $\left\{\left(X_n, p_n\right)\right\}_{n=1}^{\infty}$ of pointed metric spaces \emph{converges in the Gromov--Hausdorff sense} to a pointed metric space $(X, p)$ if the following conditions hold: for every $r>0$ and $\varepsilon>0$, there exists a natural number $n_0$ such that for each $n>n_0$, there exists a (not necessarily continuous) map $f: U_r\left(p_n\right) \rightarrow X$ such that:
    \begin{enumerate}[\normalfont 1.]
        \item  $f\left(p_n\right)=p$;
\item $\operatorname{dis} f<\varepsilon$;
\item  the $\varepsilon$-neighbourhood of the set $f\left(U_r\left(p_n\right)\right)$ contains the ball $U_{r-\varepsilon}(p)$. \label{cond: third pointed convergence condition}
\end{enumerate}
We shall use the notation $\left(X_n, p_n\right) \xrightarrow{GH}(X, p)$ for this convergence.
\label{def:punctured-gh-convergence}
\end{definition}

Let $\mathbb S_r^n$ denote the $n$-sphere of radius $r$. The following example of convergence of pointed spaces is typical.

\begin{example}
Let $\{r_k\}$ be a sequence tending to infinity. Then
\[
\left(\mathbb S^n_{r_k},p_k\right) \xrightarrow{GH} (\mathbb R^n,p),
\]
where $p_k$ and $p$ are arbitrarily chosen points on the spheres and on the plane, respectively. \label{ex:punctured-conv}
\end{example}

Note that if in item 3 of Definition~\ref{def:punctured-gh-convergence} one replaces $U_{r-\varepsilon}$ by $U_r$, one obtains a different, non-equivalent definition. The difference appears for spaces with non-intrinsic metrics. For instance, consider
\[
X=\{1\}\cup\{2^m\mid m\in\mathbb N\},\quad p=1,\qquad
X_k=\{1\}\cup\{2^m+\tfrac1k\mid m\in\mathbb N\},\quad p_k=1,
\] we have $\left(X_k,p_k\right)\xrightarrow{GH}(X,p)$, but without subtracting the constant $\varepsilon$ convergence fails.

\begin{proposition}
Properties 3–8 of Theorem~\ref{th:properties of GH limits} remain valid under pointed Gromov-Hausdorff convergence.
\end{proposition}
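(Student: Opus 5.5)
The plan is to reduce each property to its local content on balls around the base point and then run the compact-case arguments, using the maps supplied by Definition~\ref{def:punctured-gh-convergence}.

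\textbf{Preliminary step.} Every point $y\in Y$ lies in some ball $U_{r-\varepsilon}(p)$. Hence any property that can be checked on bounded pieces of $Y$ can be approximated by the images $f(U_r(p_n))$ with arbitrarily small distortion. The key observation is that the properties listed in items 3--8 of Theorem~\ref{th:properties of GH limits} are all \emph{local in this sense}: each can be checked on bounded pieces. Items 1--2 are genuinely global, and that is why they are excluded.

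\textbf{Separability.} Fix $r,\varepsilon$ and take the countable union over $r=k$, $\varepsilon=1/k$. Take a countable dense subset $S_n$ of $U_r(p_n)$; its image $f(S_n)$ is countable. This image is a $2\varepsilon$-net in $U_{r-\varepsilon}(p)$, since $f(U_r(p_n))$ is an $\varepsilon$-net there and $\operatorname{dis}f<\varepsilon$. The union over all $k$ is therefore a countable dense subset of $Y$.

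\textbf{Properness, assuming $Y$ complete.} It suffices to show that each closed ball in $Y$ is totally bounded. Fix a ball of radius $R$ and take $r=R+1$. The ball $\bar U_r(p_n)$ is compact in $X_n$, so it has a finite $\varepsilon$-net. Pushing this net forward by $f$ gives a finite $3\varepsilon$-net of $U_{R}(p)$. Completeness of $Y$ then yields compactness.

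\textbf{Length space, assuming $Y$ complete.} Here I would prove the midpoint criterion: for $y,y'\in Y$ and $\delta>0$ there should be $z$ with $\max(d(y,z),d(z,y'))\le \tfrac12 d(y,y')+\delta$; for complete spaces this is equivalent to being a length space. Choose $r$ large compared with $d(p,y)+d(y,y')$. Pick $x,x'\in U_r(p_n)$ whose images under $f$ are $\varepsilon$-close to $y$ and $y'$. Because $X_n$ is a length space, $x,x'$ have an approximate midpoint $m$. Its distance to $p_n$ is bounded by about $d(p_n,x)+d(x,x')$, so $m\in U_r(p_n)$ once $r$ is large. Then $f(m)$ is a $(4\varepsilon+\delta)$-midpoint of $y$ and $y'$.

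\textbf{Proper geodesic.} This follows from the two previous items. A complete, locally compact length space is geodesic, and here $Y$ is proper, so Hopf–Rinow gives geodesicity.

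\textbf{$\delta$-hyperbolicity.} I would use the four-point (Gromov product) formulation, which is a closed condition on finitely many distances. Given four points of $Y$, take $r$ large enough that they lie in $U_{r-\varepsilon}(p)$. Choose preimages up to $\varepsilon$; their distances in $X_n$ differ from those in $Y$ by at most $3\varepsilon$. Passing $\varepsilon\to0$ transfers the inequality to $Y$.

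\textbf{$\mathrm{CAT}^\kappa$.} As in the compact case of Theorem~\ref{th:properties of GH limits}, I would use the four-point characterization together with approximate midpoints (for $\kappa>0$, restricted to triangles of perimeter below $2\pi/\sqrt\kappa$). Both are finite-point conditions, so they transfer to $Y$ exactly as above, and they transfer under the standing completeness assumption.

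\textbf{Main obstacle.} The delicate point is a boundary effect: approximating points, and especially the auxiliary midpoints, must stay inside the region where $f$ is defined and where item \ref{cond: third pointed convergence condition} guarantees coverage. I would handle this by always taking $r$ much larger than the diameter of the finite configuration plus its distance to $p$, since the triangle inequality in $X_n$ then keeps everything inside $U_r(p_n)$.
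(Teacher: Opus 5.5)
Your proof is correct, but it is organized differently from the paper's. The paper gives a two-line reduction. Separability follows because every ball of $Y$ is separable by Theorem~\ref{th:properties of GH limits}. For the remaining items, the paper works inside a sufficiently large ball, asserts that there one has ordinary Gromov--Hausdorff convergence, and applies Theorem~\ref{th:properties of GH limits} as a black box.

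You instead redo the compact-case arguments directly with the maps $f$ of Definition~\ref{def:punctured-gh-convergence}. You reduce every item to a finite-point statement (countable or finite nets, approximate midpoints, four-point conditions) and transfer it with error $O(\varepsilon)$.

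The paper's route is shorter and reuses the known theorem. However, it leaves implicit exactly the point you single out as the main obstacle:
\begin{itemize}
\item A ball $U_r(p_n)$ need not be Gromov--Hausdorff close to $U_r(p)$, since condition~3 only controls $U_{r-\varepsilon}(p)$. For example, take $\{0,1-\frac1n\}\to\{0,1\}$ with base point $0$ and $r=1$.
\item A ball with the restricted metric is generally not a length or geodesic space. So items 5 and 6, and the geodesic part of item 8, do not literally follow by applying the theorem to balls. One has to enlarge the radius so that approximate midpoints stay in the domain of $f$, which your choice of $r$ much larger than $d(p,y)+d(y,y')$ does explicitly.
\end{itemize}
Your route is therefore more self-contained and more careful, at the cost of re-proving the compact-case arguments. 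For $\mathrm{CAT}^\kappa$ you still rely on the standard characterization of complete $\mathrm{CAT}^\kappa$ spaces by the four-point condition plus approximate midpoints.

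One cosmetic fix: $f$ is defined only on the open ball $U_r(p_n)$. In the properness step, take the finite $\varepsilon$-net inside $U_r(p_n)$ (possible since it is totally bounded), and cover the closed ball $\bar U_R(p)\subseteq U_{r-\varepsilon}(p)$.
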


\begin{proof}
To prove separability of $Y$, it suffices to use the fact that each ball is separable by Theorem~\ref{th:properties of GH limits}. In the remaining cases, it is enough to work inside a sufficiently large ball of finite radius. On finite-radius balls we have ordinary Gromov–Hausdorff convergence, so the properties pass to the limit by Theorem~\ref{th:properties of GH limits}.
\end{proof}

If one additionally assumes that $X_k$ and $Y$ have intrinsic metrics, then the convergence of diameters $\operatorname{diam} X_k \to \operatorname{diam} Y$ also holds. A proof can be found in \cite{jansen2017pointedGH}. However, that paper uses a definition of Gromov–Hausdorff convergence which is not equivalent to ours; but if one restricts to spaces with intrinsic metrics, the definitions become equivalent. Boundedness, of course, is not preserved under taking limits; see Example~\ref{ex:punctured-conv}.

Pointed Gromov–Hausdorff convergence induces a topology on the class of all pointed spaces. It was shown in \cite{Khezeli_2020, noda2024} that this topology is metrizable on the class of proper metric spaces.

\section{Quasi-isometric distance}
\label{sec:qi-distance}

There are several equivalent definitions of quasi-isometry. We will use a definition of quasi-isometry close to that in \cite{bridson2013metric}, see p. 138.

\begin{definition}\label{def: (A, B, C)-quasi-isometry}
Let $(X,d_X)$ and $(Y,d_Y)$ be metric spaces, and let $A \ge 1,B \ge 0, C \ge 0$ be some numbers.
\begin{enumerate}[\normalfont 1.]
\item A map $f \colon X \to Y$ is called an \emph{$(A,B)$-quasi-isometric embedding} if for all $x_1,x_2 \in X$:
\begin{equation}
\frac{1}{A}d_X(x_1,x_2) - B \le d_Y(f(x_1),f(x_2)) \le A d_X(x_1,x_2) + B. \label{eq:qi-emb}
\end{equation}
\item A pair of maps $f \colon X \to Y,$ $g\colon Y \to X$ is called \emph{$C$-inverse} if  there exists $\varepsilon> 0,$ such that for any $x \in X, y\in Y$ the following inclusions hold:
\[
g\left(U_\varepsilon\left(f(x)\right)\right) \subseteq U_C(x), \quad f\left(U_\varepsilon\left(g(y)\right)\right) \subseteq U_C(y).
\]
\item The spaces $X$ and $Y$ are \emph{$(A, B, C)$-quasi-isometric} if there exist $C$-inverse $(A, B)$-quasi-isometric embeddings $f \colon X \to Y$ and $g \colon Y \to X$.
\end{enumerate}
\end{definition}

Spaces $X$ and $Y$ are called \emph{quasi-isometric} if there exist $A, B, C$ such that they are $(A, B, C)$-quasi-isometric.
Using the definition of $(A, B, C)$-quasi-isometry, it is easy to define \emph{quasi-isometric convergence} of spaces.
We write $X_k \xrightarrow{\rm q.i.} Y$ if, starting from some index $N,$ the spaces $X_k$ and $Y$ are $(A_k, B_k, C_k)$-quasi-isometric with $A_k \to 1,$ $B_k \to 0,$ $C_k \to 0.$ 

\begin{remark}
    The above definition of a $C$-inverse map is not standard. This form of the definition is essential for Proposition~\ref{prop: metric is well defined}, where the inverse condition must be applied not at $f(x)$ itself but at a nearby point. The standard requirement is that $d_X(g(f(x)), x) \le \tilde C,$ $d_Y(g(f(y)), y) \le \tilde C$ for any $x \in X, y\in Y.$ Clearly, our definition implies the standard one.  Conversely, if $f, g$ are $(A,B)$-quasi-isometric embeddings that are $\tilde C$-inverse in standard notation, then they are $C$-inverse in our notation for any $C > \tilde C + B$. Proposition~\ref{prop:another-definition of a QI} shows that our definition is indeed equivalent to that given in \cite{bridson2013metric}. 
\end{remark}

In what follows, by a \emph{distance} we mean a symmetric function $d\colon X\times X \to [0, +\infty].$ 
Distances that satisfy all the usual axioms of a metric space and take only finite values will be called \emph{metrics}.
When a metric may take infinite values, we use the term \emph{generalized metric}.
\begin{definition} The \emph{quasi-isometric distance} $\hat d$ between metric spaces $X$ and $Y$ is
$$\hat d(X, Y) = \inf\{ r > 0 \mid X \text{ is $(1+r, r, r)$-quasi-isometric to } Y\}.$$
\end{definition}
Obviously $\hat d (X_k, Y) \to 0$ is equivalent to $X_k \xrightarrow{\rm q.i.} Y.$

The next proposition describes how quasi-isometry constants behave under composition.
\begin{proposition}
    Suppose $X$ is $(A,B,C)$-quasi-isometric to $Y,$ and $Y$ is $(A', B', C')$-quasi-isometric to $Z.$ Then $X$ is $(AA', AB' + A'B, AC' + A' C + B + B')$-quasi-isometric to $Z.$ \label{prop:constants-of-composition}
\end{proposition}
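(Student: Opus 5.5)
The plan is to compose the given maps and check the two parts of Definition~\ref{def: (A, B, C)-quasi-isometry} separately. Let $f\colon X\to Y$, $g\colon Y\to X$ be $C$-inverse $(A,B)$-quasi-isometric embeddings with parameter $\varepsilon$. Let $f'\colon Y\to Z$, $g'\colon Z\to Y$ be $C'$-inverse $(A',B')$-quasi-isometric embeddings with parameter $\varepsilon'$. I will show that $F=f'\circ f$ and $G=g\circ g'$ are $C''$-inverse $(AA',\,AB'+A'B)$-quasi-isometric embeddings, where $C''=AC'+A'C+B+B'$.

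\emph{Quasi-isometric embedding.} Apply \eqref{eq:qi-emb} twice. For the upper bound,
\[
d_Z(F x_1,F x_2)\le A'\bigl(A\,d_X(x_1,x_2)+B\bigr)+B' = AA'\,d_X(x_1,x_2)+A'B+B'.
\]
Since $A\ge 1$, we have $B'\le AB'$, which gives the claimed additive constant. For the lower bound,
\[
d_Z(Fx_1,Fx_2)\ge \frac{1}{A'}\Bigl(\frac{1}{A}d_X(x_1,x_2)-B\Bigr)-B' = \frac{d_X(x_1,x_2)}{AA'}-\frac{B}{A'}-B'.
\]
Since $A'\ge1$, we have $B/A'\le A'B$, and also $B'\le AB'$, so this is at least $d_X(x_1,x_2)/(AA')-(A'B+AB')$. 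The same computation with the roles of the spaces swapped handles $G$.

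\emph{The $C''$-inverse condition.} This is the only delicate point. The difficulty is that the $C$-inverse property of $(f,g)$ only controls $g$ on $\varepsilon$-neighbourhoods of points $f(x)$, while after applying $g'$ we only know we are within $C'$ of $f(x)$, and $C'$ may exceed $\varepsilon$. I will bridge this gap with the quasi-isometric embedding bound for $g$ rather than the inverse property.

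Set $\varepsilon''=\min(\varepsilon,\varepsilon')$ and take $z\in U_{\varepsilon''}(F(x))$.
\begin{enumerate}[\normalfont 1.]
\item Since $\varepsilon''\le\varepsilon'$, the $C'$-inverse property of $(f',g')$ applied at the point $f(x)\in Y$ gives $d_Y(g'(z),f(x))<C'$.
\item By \eqref{eq:qi-emb} for $g$, this yields $d_X\bigl(g(g'(z)),g(f(x))\bigr)\le AC'+B$.
\item The $C$-inverse property of $(f,g)$, applied to the point $f(x)$ itself (which lies in $U_\varepsilon(f(x))$), gives $d_X(g(f(x)),x)<C$.
\end{enumerate}
By the triangle inequality, $d_X(G(z),x)<AC'+B+C\le C''$, using $C\le A'C$ and $0\le B'$. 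The strict inequality in step 3 keeps the final estimate strict.

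Symmetrically, for $w\in U_{\varepsilon''}(G(z))$, the inverse property of $(f,g)$ gives $d_X(f(w),g'(z))<C$ after applying it at the point $g'(z)\in Y$. Then \eqref{eq:qi-emb} for $f'$ gives $d_Z\bigl(f'(f(w)),f'(g'(z))\bigr)\le A'C+B'$. Finally, $d_Z(f'(g'(z)),z)<C'$ by the inverse property of $(f',g')$. Altogether $d_Z(F(w),z)<A'C+B'+C'\le C''$, using $C'\le AC'$ and $0\le B$. Hence $F$ and $G$ are $C''$-inverse with parameter $\varepsilon''$, which proves the proposition.
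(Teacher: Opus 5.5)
Your proof is correct and is essentially the paper's argument. You compose the two embedding inequalities the same way, and for the inverse condition you use the same chain $d_X(gg'z,x)\le d_X(gg'z,gfx)+d_X(gfx,x)\le A\,d_Y(g'z,fx)+B+C\le AC'+B+C$, closing the $C'$-gap with the quasi-isometric bound for $g$. You are somewhat more careful than the paper: you make $\varepsilon''=\min(\varepsilon,\varepsilon')$ explicit, keep the inequalities strict for the open balls, and write out the second order in full. In that second order, $d_X(f(w),g'(z))$ is a typo for $d_Y(f(w),g'(z))$.
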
 
\begin{proof}
We check by direct computation.
Let $f\colon X\to Y,$ $f' \colon Y\to Z$ be the corresponding quasi-isometric embeddings, then 
\begin{multline*}
d_Z(f'fx_1, f'fx_2) \le A' d_Y(fx_1, fx_2) + B' 
\le A'A d_X(x_1, x_2) + A'B + B' \\
\le A'A d_X(x_1, x_2) + A'B + AB';
\end{multline*}
\begin{multline*}
d_Z(f'fx_1, f'fx_2) \ge \frac{1}{A'} d_Y(fx_1, fx_2) - B' 
\ge \frac{1}{A'A} d_X(x_1, x_2) - \frac{B}{A'} - B' \\
\ge \frac{1}{A'A} d_X(x_1, x_2) - A'B - AB'.
\end{multline*}
Similar inequalities hold for $g, g'.$ 

It remains to check that $f'f$ and $g g'$ are $(AC' + A' C + B + B')$-inverse. Fix $\varepsilon > 0$ small enough so that both the
$C$-inverse condition for $f, g$, and the $C'$-inverse condition for $f', g'$ are satisfied.  For any $x \in X,$ let $z$ be an arbitrary point such that $d_Z(z, f'f(x)) < \varepsilon.$ Then 
\begin{equation}\label{eq: inverse closeness inequality}
d_X(x, gg'z) \le d_X(gg'z, gfx) + d_X(gfx, x) \le A d_Y(g'z, fx) + B + C \le A C' + B +C.
\end{equation}
The analogous estimate for the other order of composition follows similarly. Thus, $X$ is $(A'A, AB' + A'B, AC' + A' C + B + B')$-quasi-isometric to $Z.$ 
\end{proof}
\begin{corollary} The distance $\hat d$ satisfies: $\forall X, Y, Z$
\begin{enumerate}[\normalfont 1.]
    \item $\hat d(X,X) = 0;$
    \item $\hat d(X, Y) = \hat d(Y, X);$
    \item if $\hat d(X, Y) = r,$ $\hat d(Y, Z) = r'$ then $\hat d(X, Z) \le 2(r + r' + rr').$
\end{enumerate} \label{cor:generalised-triangle-equation}
\end{corollary}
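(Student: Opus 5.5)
Items 1 and 2 follow directly from the definitions, and item 3 follows from Proposition~\ref{prop:constants-of-composition} together with the monotonicity of the quasi-isometry constants.

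\emph{Item 1.} The identity map $\mathrm{id}_X$ is a $(1,0)$-quasi-isometric embedding. Moreover, the pair $(\mathrm{id}_X,\mathrm{id}_X)$ is $C$-inverse for every $C>0$: take $\varepsilon=C$, so that $U_\varepsilon(x)\subseteq U_C(x)$. Note that $(1,0)$-embeddings are in particular $(1+r,r)$-embeddings. Hence $X$ is $(1+r,r,r)$-quasi-isometric to itself for every $r>0$, and the infimum is $0$.

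\emph{Item 2.} Definition~\ref{def: (A, B, C)-quasi-isometry} is symmetric in the roles of $(f,g)$ and $(X,Y)$. Indeed, the $C$-inverse condition consists of two inclusions that are exchanged when we swap $f$ and $g$. So the set of admissible $r$ is the same for $(X,Y)$ and $(Y,X)$.

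\emph{Item 3.} First I record a monotonicity remark. If $A\le A'$, $B\le B'$, $C\le C'$, then an $(A,B,C)$-quasi-isometry is also an $(A',B',C')$-quasi-isometry. This holds because both inequalities in \eqref{eq:qi-emb} only weaken, and $U_C\subseteq U_{C'}$.

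Now fix $s>r$ and $s'>r'$, arbitrarily close to $r$ and $r'$. By the definition of the infimum there exist $s_0<s$ and $s_0'<s'$ such that $X$ is $(1+s_0,s_0,s_0)$-quasi-isometric to $Y$ and $Y$ is $(1+s_0',s_0',s_0')$-quasi-isometric to $Z$. By monotonicity we may assume these constants are exactly $(1+s,s,s)$ and $(1+s',s',s')$; this avoids worrying about whether the infimum is attained.

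Proposition~\ref{prop:constants-of-composition} then gives that $X$ is $(A'',B'',C'')$-quasi-isometric to $Z$ with
\begin{align*}
A''&=(1+s)(1+s')=1+s+s'+ss',\\
B''&=(1+s)s'+(1+s')s=s+s'+2ss',\\
C''&=(1+s)s'+(1+s')s+s+s'=2(s+s'+ss').
\end{align*}
Put $t=2(s+s'+ss')$. Each of $A''-1$, $B''$, $C''$ is at most $t$, so by monotonicity $X$ is $(1+t,t,t)$-quasi-isometric to $Z$. Therefore $\hat d(X,Z)\le 2(s+s'+ss')$. Letting $s\to r$ and $s'\to r'$ yields $\hat d(X,Z)\le 2(r+r'+rr')$.

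If $r$ or $r'$ is infinite, the inequality is trivial. The only mildly delicate point in the whole argument is the attainment issue, and the monotonicity remark handles it.
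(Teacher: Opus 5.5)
Your proof is correct and takes the same route as the paper, which gives no separate argument and treats the statement as an immediate consequence of Proposition~\ref{prop:constants-of-composition} applied to constants $(1+r,r,r)$ and $(1+r',r',r')$. There the third constant $AC'+A'C+B+B'=2(r+r'+rr')$ dominates the other two; your monotonicity remark makes explicit the passage from the infimum to usable constants, which the paper leaves implicit.
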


\begin{proposition} \label{prop: metric is well defined}
    Let $X,$ $Y_1$ and $Y_2$ be metric spaces. If $\hat{d}(Y_1, Y_2) = 0,$ then \[\hat d(X, Y_1) = \hat d (X, Y_2).\]
\end{proposition}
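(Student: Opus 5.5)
The plan is to show $\hat d(X,Y_2)\le \hat d(X,Y_1)$ and then conclude by symmetry, since the roles of $Y_1$ and $Y_2$ are interchangeable. Fix $r>\hat d(X,Y_1)$. Take $(1+r,r)$-quasi-isometric embeddings $f\colon X\to Y_1$ and $g\colon Y_1\to X$ that are $r$-inverse with some $\varepsilon>0$. Since $\hat d(Y_1,Y_2)=0$, for every $\delta>0$ we can take $(1+\delta,\delta)$-quasi-isometric embeddings $f'\colon Y_1\to Y_2$ and $g'\colon Y_2\to Y_1$ that are $\delta$-inverse with some $\varepsilon'>0$. We are free to choose $\delta<\varepsilon$. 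We then consider the compositions $f'f$ and $gg'$. By the first part of Proposition~\ref{prop:constants-of-composition}, these are $\bigl((1+r)(1+\delta),\,(1+r)\delta+(1+\delta)r\bigr)$-quasi-isometric embeddings, and both constants tend to $1+r$ and $r$ as $\delta\to 0$.

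The main obstacle is the inverse constant. Proposition~\ref{prop:constants-of-composition} only gives $C\approx 2r$, because of the additive term $B+B'$. That bound is useless here, since we need $C$ close to $r$. This is exactly where the nonstandard definition of $C$-inverse is needed, and we estimate the two inclusions separately.

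\emph{First inclusion.} We use $\varepsilon'$ as the witnessing radius. Let $d_{Y_2}(z,f'f(x))<\varepsilon'$. The $\delta$-inverse property of $f',g'$, applied at the point $f(x)$, gives $g'z\in U_\delta(fx)\subseteq U_\varepsilon(fx)$. The $r$-inverse property of $f,g$ then gives $gg'z\in U_r(x)$. So the constant here is exactly $r$.

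\emph{Second inclusion.} We use $\varepsilon$ as the witnessing radius. Let $z\in Y_2$, put $y=g'z$, and take $x$ with $d_X(x,g(y))<\varepsilon$. By the $r$-inverse property, $d_{Y_1}(fx,y)<r$. Since $f'$ is a $(1+\delta,\delta)$-quasi-isometric embedding, $d_{Y_2}(f'fx,f'y)\le(1+\delta)r+\delta$. The standard consequence of being $\delta$-inverse gives $d_{Y_2}(f'g'z,z)\le\delta$. Combining these, $f'f(x)\in U_{(1+\delta)r+2\delta}(z)$. Finally, we take the smaller of $\varepsilon$ and $\varepsilon'$ as a common radius, which keeps both inclusions valid.

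Hence $X$ and $Y_2$ are $(A_\delta,B_\delta,C_\delta)$-quasi-isometric, where
\[
A_\delta=(1+r)(1+\delta),\qquad B_\delta=(1+r)\delta+(1+\delta)r,\qquad C_\delta=(1+\delta)r+2\delta .
\]
All three constants tend to $1+r$, $r$ and $r$ as $\delta\to 0$. So for any $r'>r$ we can choose $\delta$ small enough to get a $(1+r',r',r')$-quasi-isometry between $X$ and $Y_2$. This gives $\hat d(X,Y_2)\le r'$. Letting $r'\downarrow r$ and then $r\downarrow\hat d(X,Y_1)$ gives $\hat d(X,Y_2)\le \hat d(X,Y_1)$. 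If $\hat d(X,Y_1)=\infty$ there is nothing to prove. Exchanging $Y_1$ and $Y_2$ gives equality.
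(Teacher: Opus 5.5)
Your proof is correct and follows essentially the same route as the paper: you compose the two quasi-isometries, take the embedding constants from Proposition~\ref{prop:constants-of-composition}, and check the two inverse inclusions directly, getting constants $r$ and $(1+\delta)r+2\delta$, instead of using the lossy bound $AC'+A'C+B+B'$. Your handling of the witnessing radii (choosing $\delta<\varepsilon$ and then taking $\min\{\varepsilon,\varepsilon'\}$) is in fact more explicit than the paper's.
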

\begin{proof}
    Fix an arbitrary $r > \hat d(X, Y_1).$ Then there exists an $(1+r, r, r)$-quasi-isometry $\phi\colon X \to Y_1,$ $\psi \colon Y_1 \to X$.
    Let $\alpha \colon Y_1 \to Y_2,$ $\beta \colon Y_2 \to Y_1$ be a 
    $(1+\varepsilon, \varepsilon, \varepsilon)$-quasi-isometry.
    By Proposition~\ref{prop:constants-of-composition}, the compositions 
    $\alpha \phi$ and $\psi \beta$ are $(1+r + \varepsilon(1+r), r + \varepsilon(1+2r))$-quasi-isometric 
    embeddings.

    It remains to show that $\alpha \phi$ and $\psi \beta$ are $(r + O(\varepsilon))$-inverse. 
    Suppose $y_2$ is $\delta$-close to $\alpha \phi(x),$ where $\delta$ is the constant from the $\varepsilon$-inverse condition on $\alpha$ and $\beta$. Then $\beta(y_2)$ is 
    $\varepsilon$-close to $\phi(x),$ and taking $\varepsilon$ small enough, we may assume 
    that $\beta(y_2)$ and $\phi(x)$ are close enough for the $r$-inverse condition 
    on $\phi$ and $\psi$ to be satisfied. Thus, $d_X(\psi\beta(y_2), x) \le r.$

    For the other order, suppose $x$ is $\varepsilon$-close to $\psi \beta(y_2).$ 
    Applying inequality~\eqref{eq: inverse closeness inequality} with 
    $g' = \phi,$ $g = \alpha$ gives 
    $d_{Y_2}(y_2, \alpha \phi(x)) \le (1+\varepsilon)r + 2\varepsilon.$

    Letting $\varepsilon\to 0$ we conclude that $\hat{d}(X, Y_2) \le \hat{d}(X, Y_1).$ The reverse inequality follows analogously. Thus, $\hat{d}(X, Y_2) = \hat{d}(X, Y_1).$
\end{proof}

One can verify that the constructed distance is not a generalized metric, as it fails to satisfy the triangle inequality.
To see this, it suffices to consider spaces consisting of two points with different distances between them.
We say that metric spaces $X$ and $Y$ are equivalent if $\hat d(X, Y) = 0.$
Later, we shall show that using $\hat d$, one can define a topology and a coarse structure on any collection of equivalence classes 
of metric spaces that constitutes a set, and also that there exists a generalized metric $D$ that induces the same topology and coarse structure as $\hat d.$

 If metric spaces $X$ and $Y$ are quasi-isometric, then $\hat d (X, Y) < \infty.$ Thus, we obtain a finite distance on the class of quasi-isometric 
spaces. However, unlike pointed Gromov--Hausdorff convergence, convergence with respect to our distance still tracks the coarse structure of the space, in the sense that $\hat d(X_k, X) \to 0$ implies that from some index onward, the spaces $X_k$ and $Y$ are coarsely equivalent.
It suffices to choose an index beyond which all values $\hat d(X_k, X)$ are finite.

The next example shows that $\hat d$ allows for the comparison of a wider class of spaces than the Gromov--Hausdorff distance.
\begin{example}
 Let $X$ be the real line with the standard metric, and $Y$ a polygonal chain whose $k$-th segment has length $k$ and all angles are right angles; see Figure~\ref{fig:placeholder}.
The metric on $Y$ is induced from the plane. Then $$d_{GH}(X, Y) = \infty, \quad \hat d(X, Y) < \infty.$$ One can use Theorem~\ref{th:eps-iso eq gh dist} to see this. 
\begin{figure}
     \centering
    \begin{tikzpicture}[every node/.style={transform shape}, scale=0.6]
    \tikzstyle{every path}=[thick]
    \tikzstyle{dot}=[fill, circle, inner sep=1.5pt]
    \draw (0, 3) -- (12, 3);
    \node[left] at (0, 3) { $X$:};
    \node [below] at (2,3) {$\Tilde{a}$};
\node [above] at (5,3) {$\Tilde{b}$};
    \node [below] at (11,3) {$\Tilde{c}$};
    \node[dot] at (2,3) {};
    \node[dot] at (5,3) {};
\node[dot] at (11,3) {};
    \draw (0, 0) -- (1, 1) -- (3, -1) -- (6, 2) -- (10, -2) -- (13, 1);
\foreach \i in {1,...,4} {
         \node[dot] at ({13 + 0.3*\i}, {1 + 0.3*\i}) {};
}
    \node[left] at (0, 0) { $Y$:};
    \node [below] at (3,-1) {$a$};
    \node [above] at (6,2) {$b$};
\node [below] at (9,-1) {$c$};
    \node[dot] at (3,-1) {};
    \node[dot] at (6,2) {};
    \node[dot] at (9,-1) {};
\end{tikzpicture}
     \caption{Spaces that are  infinitely far in Gromov-Hausdorff distance but not in $\hat d$  }
     \label{fig:placeholder}
 \end{figure}
\end{example}

It is well known that the existence of a quasi-isometric embedding with dense image implies quasi-isometry of the spaces.
However, we are interested in concrete estimates on the constants; therefore, we prove the following proposition.

\begin{proposition} \label{prop:another-definition of a QI}
    Let $f\colon X \to Y$ be an $(A, B)$-quasi-isometric embedding whose image is $R$-dense.
Then the spaces $X$ and $Y$ are $(A, AB + 2R, 3A(B+R))$-quasi-isometric.
\end{proposition}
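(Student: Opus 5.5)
The plan is to take $f$ itself as one of the two maps and to build the quasi-inverse $g\colon Y\to X$ from density. Since $f(X)$ is $R$-dense, for every $y\in Y$ we choose (axiom of choice) a point $g(y)\in X$ with $d_Y(f(g(y)),y)< R$. We then verify three things: $f$ and $g$ are both quasi-isometric embeddings with the constants $(A, AB+2R)$, and the pair $f,g$ is $3A(B+R)$-inverse in the sense of Definition~\ref{def: (A, B, C)-quasi-isometry}. For $f$ the first point is immediate, since $A\ge 1$ gives $B\le AB\le AB+2R$.

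\emph{Estimates for $g$.} Take $y_1,y_2\in Y$. The points $f(g(y_i))$ are $R$-close to $y_i$, so
\[
d_Y(y_1,y_2)-2R\le d_Y(fg y_1,fg y_2)\le d_Y(y_1,y_2)+2R .
\]
Combining this with the two sides of \eqref{eq:qi-emb} for $f$ gives
\[
\frac{1}{A}\bigl(d_Y(y_1,y_2)-2R-B\bigr)\le d_X(gy_1,gy_2)\le A\bigl(d_Y(y_1,y_2)+2R+B\bigr).
\]
The lower bound is at least $\frac1A d_Y(y_1,y_2)-(B+2R)$, which fits the claimed additive constant.

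The upper bound produces the additive term $AB+2AR$. This is the step I expect to be the real obstacle to matching the stated constant $AB+2R$ exactly. I would first try to sharpen it by comparing $d_X(gy_1,gy_2)$ directly through the lower inequality for $f$, applied to the pair $gy_1,gy_2$. If that does not improve the constant, I would settle for $AB+2AR$ and check whether that weaker constant suffices for the later applications.

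\emph{The inverse condition.} Take $\varepsilon=R$. First let $d_Y(y,f(x))<\varepsilon$. Then $d_Y(fgy,fx)<2R$, so the lower bound for $f$ gives
\[
d_X(gy,x)\le A(2R+B)\le 3A(B+R).
\]
Next let $d_X(x',g(y))<\varepsilon$. Then
\[
d_Y(fx',y)\le d_Y(fx',fgy)+R\le A\varepsilon+B+R\le 3A(B+R).
\]
Hence the images of $\varepsilon$-neighbourhoods lie in the required $C$-balls with $C=3A(B+R)$. Strictly speaking the definition asks for the open ball $U_C$, so the estimates should be strict. Both are: the inequalities $d<\varepsilon$ and $d<R$ are strict, and they propagate through to strict bounds. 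This completes the plan, apart from the constant issue identified in the previous paragraph.
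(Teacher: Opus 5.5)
Your construction of $g$ and your overall plan are the same as the paper's, and what you actually prove is correct: $X$ and $Y$ are $(A,\,AB+2AR,\,3A(B+R))$-quasi-isometric. The step you flag is not a gap in your reasoning. It is an error in the statement itself.

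\emph{Where the paper's constant comes from.} The paper obtains $AB+2R$ only by writing
$A\,d_Y(fgy_1,fgy_2)+AB\le A\,d_Y(y_1,y_2)+AB+2R$. This drops the factor $A$ in front of $2R$. The correct bound is the $AB+2AR$ you found.

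\emph{The stated constant is false for every $A>1$.} No better choice of $g$, or of the pair of maps, can rescue it. Take $X=\mathbb Z$, $Y=\mathbb R$, $f(n)=n/A$, $B=0$ and $\frac{1}{2A}<R<\frac12$. Then $f$ is an $(A,0)$-quasi-isometric embedding with $R$-dense image. Suppose $g'\colon\mathbb R\to\mathbb Z$ were an $(A,2R)$-quasi-isometric embedding.
\begin{enumerate}
\item The upper inequality gives $|g'(y)-g'(y')|\le A|y-y'|+2R<1$ whenever $|y-y'|<(1-2R)/A$.
\item Since $g'$ is integer-valued, it is locally constant, hence constant on $\mathbb R$.
\item This contradicts the lower inequality $\frac1A|y-y'|-2R\le|g'(y)-g'(y')|$ once $|y-y'|>2AR$.
\end{enumerate}

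\emph{What to do.} Drop the plan to sharpen the bound via the lower inequality for $f$; that is exactly the inequality you already used. State the corrected constant $AB+2AR$ instead. This costs nothing later in the paper. The only quantitative use is Corollary~\ref{cor:GH-->QI}, where $A=1$ and so $AB+2R=AB+2AR$. The other use, in the remark on equivalence of definitions, is qualitative.

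\emph{Your inverse check is cleaner than the paper's.}
\begin{itemize}
\item For $d_Y(y,f(x))<\varepsilon=R$ you get $d_X(gy,x)<A(2R+B)$ in one step from the lower inequality for $f$. The paper instead goes through $gf(x)$ and relies on its mis-stated upper bound for $g$.
\item The paper fixes $\varepsilon$ with $A\varepsilon<B$, which is impossible when $B=0$. Your choice $\varepsilon=R>0$ always works.
\item Your choice also yields the strict inequalities that the open balls $U_C$ require.
\end{itemize}
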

\begin{proof}
    Define $g \colon Y \to X.$ For each point $y \in Y$, there exists a (possibly not unique) point $x \in X$ such that $d_Y(y, f(x)) < R.$ Set $g(y) = x.$ For the map $g$, we have the inequalities
    \begin{align*} d_X(g(y_1), g(y_2)) \le A d_Y(f(g(y_1)), f(g(y_2))) + AB \le A d_Y(y_1, y_2) + AB + 2R;
\\
    d_X(g(y_1), g(y_2)) \ge \frac{1}{A} d_Y(f(g(y_1)), f(g(y_2))) - B \ge \frac{1}{A} d_Y(y_1, y_2) - B - \frac{2R}{A}.
\end{align*}
    Hence, $g$ is an $(A, B+2R)$-quasi-isometry.
Now we need to obtain a constant for which these maps are inverse. Fix $\varepsilon > 0$ such that $A\varepsilon < B.$ Let $x$ be a point $\varepsilon$-close to $g(y).$ Then 
$$
        d_Y(f(x), y) \le d_Y(f(x), fg(y)) + d_Y(fg(y), y) \le A\varepsilon + B + R \le 3A(B+R).$$

        For the other order of compositions, let $y$ be a point $\varepsilon$-close to $f(x).$ Then
        \[d_X(g(y), x) \le d_X(g(y), gf(x)) + d_X(gf(x), x) \le 3A(B+R),\]
        where the last inequality is obtained as follows:
        \begin{align*}
        &d_X(g(y), gf(x)) \le  A \varepsilon + AB + 2R \le A(2B + 2R) ,\\
        &d_X(gf(x), x) \le A d_Y(fgf(x), f(x)) + AB \le A(R  +B).
        \end{align*}
That completes the proof.
\end{proof}
\begin{corollary}
    Let $X,$ $Y$ be metric spaces.
Then $\hat d (X, Y) \le 9\, d_{GH} (X, Y).$
    \label{cor:GH-->QI}
\end{corollary}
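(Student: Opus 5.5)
If $d_{GH}(X,Y)=\infty$ there is nothing to prove. Otherwise I fix an arbitrary $\varepsilon>d_{GH}(X,Y)$ and aim to show that $X$ and $Y$ are $(1+r,r,r)$-quasi-isometric for some $r\le 9\varepsilon$. Letting $\varepsilon\downarrow d_{GH}(X,Y)$ then gives the claim; this also covers the case $d_{GH}(X,Y)=0$. I will in fact aim for the sharper bound $r$ close to $2\varepsilon$.

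The most direct route uses the correspondence formula $d_{GH}(X,Y)=\frac12\inf_R \operatorname{dis}R$. It yields a correspondence $R$ with $\operatorname{dis}R<2\varepsilon$. From $R$ I choose maps $\varphi\colon X\to Y$ and $\psi\colon Y\to X$ whose graphs lie in $R$, so that $R_{\varphi\psi}\subseteq R$. By the proposition on $\operatorname{dis}R_{\varphi,\psi}$, all three quantities $\operatorname{dis}R_\varphi$, $\operatorname{dis}R_\psi$ and $C_{\varphi,\psi}$ are $<2\varepsilon$. The bound $\operatorname{dis}\varphi<2\varepsilon$ immediately makes $\varphi$ a $(1,2\varepsilon)$-quasi-isometric embedding in the sense of \eqref{eq:qi-emb}, and the same holds for $\psi$.

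For the inverse condition of Definition~\ref{def: (A, B, C)-quasi-isometry}, take any $\delta>0$, and let $y$ satisfy $d_Y(y,\varphi(x))<\delta$. Then
\[
d_X(x,\psi(y))\le \bigl|d_X(x,\psi(y))-d_Y(\varphi(x),y)\bigr|+d_Y(\varphi(x),y)<2\varepsilon+\delta .
\]
Symmetrically, if $d_X(x,\psi(y))<\delta$ then $d_Y(y,\varphi(x))<2\varepsilon+\delta$. Hence $\varphi,\psi$ are $(2\varepsilon+\delta)$-inverse, with the same $\delta$ serving as the radius in the definition. So $X$ and $Y$ are $(1+r,r,r)$-quasi-isometric with $r=2\varepsilon+\delta$. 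This gives $\hat d(X,Y)\le 2\varepsilon$, and therefore $\hat d(X,Y)\le 2\,d_{GH}(X,Y)\le 9\,d_{GH}(X,Y)$.

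An alternative route, which is probably the one intended by the constant $9$, combines Theorem~\ref{th:eps-iso eq gh dist} with Proposition~\ref{prop:another-definition of a QI}. Theorem~\ref{th:eps-iso eq gh dist} gives a $2\varepsilon$-isometry $f$. Such an $f$ is a $(1,2\varepsilon)$-quasi-isometric embedding with $2\varepsilon'$-dense image for any $\varepsilon'>\varepsilon$, and the proposition then bounds the constants by a fixed multiple of $\varepsilon$. With the constants exactly as stated there, however, this yields $C=3(B+R)\approx 12\varepsilon$, which is worse than $9\varepsilon$. The main obstacle on this route is therefore bookkeeping: one would have to re-run the inverse estimate of that proof for $A=1$ more carefully. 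For that reason I would present the correspondence argument above, where the only delicate point is the quantifier over the auxiliary radius in the nonstandard $C$-inverse definition. That point is handled by choosing $\delta$ freely and absorbing it into $r$.
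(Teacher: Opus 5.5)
Your proof is correct, and it takes a genuinely different and sharper route than the paper's.

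\textbf{The paper's route.} It takes a single $2r$-isometry $f$ from Theorem~\ref{th:eps-iso eq gh dist}. It then feeds $f$ into Proposition~\ref{prop:another-definition of a QI}, which builds a quasi-inverse $g$ from the density of $f(X)$. The inverse constant then accumulates through a chain of triangle inequalities.

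\textbf{Your route.} You pick $\varphi$ and $\psi$ inside one correspondence $R$ with $\operatorname{dis}R<2\varepsilon$. The cross term $C_{\varphi,\psi}<2\varepsilon$ then gives the inverse condition directly, and the auxiliary radius $\delta$ is absorbed into $r=2\varepsilon+\delta$. This yields $\hat d(X,Y)\le 2\,d_{GH}(X,Y)$, and your handling of the nonstandard $C$-inverse quantifier is sound.

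\textbf{Your side remark is accurate.} The paper treats the $2r$-isometry as having $r$-dense image, but such a map is only guaranteed a $2r$-dense image. With $R=2r$, the proposition gives $C=3(B+R)=12r$. So your argument is the one that actually secures the stated constant $9$.

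\textbf{What each route buys.} The paper's route reuses a general lemma that needs only one map with dense image. Yours exploits the two-sided information in a correspondence and gives an optimal constant. For example, let $X$ be a point and $Y$ two points $a,b$ at distance $D$. If $f$ sends the point to $a$, the condition $f(U_\delta(g(b)))\subseteq U_C(b)$ forces $C>D$. Hence $\hat d(X,Y)=D=2\,d_{GH}(X,Y)$.
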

\begin{proof}
    Indeed, let $d_{GH} (X, Y) = r,$ then there exists a $2 r$-isometry $f$ from $X$ into $Y.$ Then $f$ is a $(1, 2r)$-isometry with $r$-dense image.
Hence, $X$ and $Y$ are $(1, 4 r, 9 r)$-quasi-isometric. Therefore, $\hat d (X, Y) \le 9 r.$ 
\end{proof}

Thus, Gromov--Hausdorff convergence implies quasi-isometric convergence.
The converse, of course, is false. The example below also appears in \cite{mikhailov2025newgeodesiclinesgromovhausdorff}. 

\begin{example}
    Let $\alpha_k$ be a sequence of irrational numbers such that $\alpha_k \to 1$ as $k \to \infty.$
    Consider $X_k = \alpha_k \mathbb Z.$ Then $\hat{d}(X_k, X) \le |1-\alpha_k|
\to 0,$ 
    but $d_{GH}(\alpha_k \mathbb Z , \mathbb Z) \ge \frac{1}{4}.$ The latter inequality follows from Theorem~\ref{th:eps-iso eq gh dist} and the fact that the distortion of any map $\alpha_k \mathbb Z \to \mathbb Z$ is at least $\frac{1}{2}.$ 
\end{example}

We show that quasi-isometric convergence implies pointed Gromov--Hausdorff convergence.
\begin{proposition}
    If $X_k \xrightarrow{\rm q.i.} Y,$ then there exist $p_k\in X_k$ and $p \in Y,$ such that \[(X_k, p_k) \xrightarrow{GH} (Y, p).\] That is, quasi-isometric convergence implies pointed convergence.
\label{prop:QI-->PGH}
\end{proposition}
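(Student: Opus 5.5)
The plan is to build the pointed approximations directly from the quasi-isometries given by the definition of $X_k \xrightarrow{\rm q.i.} Y$. For large $k$ fix $(A_k,B_k,C_k)$-quasi-isometries $f_k\colon X_k\to Y$, $g_k\colon Y\to X_k$ with $A_k\to1$, $B_k\to0$, $C_k\to0$. Choose an arbitrary base point $p\in Y$ and put $p_k=g_k(p)$. Given $r>0$ and $\varepsilon>0$, I will take as the required map the restriction of $f_k$ to $U_r(p_k)$, modified at the single point $p_k$ by setting $f(p_k)=p$. This gives condition 1 of Definition~\ref{def:punctured-gh-convergence} by construction.

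\emph{Distortion.} For $x,x'\in U_r(p_k)$ we have $d(x,x')<2r$. The two inequalities in~\eqref{eq:qi-emb} give
\[
|d_Y(f_kx,f_kx')-d_{X_k}(x,x')|\le \max\{(A_k-1),\,(1-1/A_k)\}\,2r+B_k ,
\]
which tends to $0$ as $k\to\infty$. The modification at $p_k$ costs at most $d_Y(f_k(p_k),p)$. The $C_k$-inverse condition, applied with the point $f_k(g_k(p))$ itself (distance $0<\varepsilon$), gives $f_k(g_k(p))\in U_{C_k}(p)$. So the distortion grows by at most $C_k$. Hence $\operatorname{dis} f\le 2r(A_k-1)+B_k+C_k<\varepsilon$ for $k$ large.

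\emph{Covering condition~3.} Let $y\in U_{r-\varepsilon}(p)$ and set $x=g_k(y)$. By the quasi-isometric embedding inequality for $g_k$,
\[
d_{X_k}(x,p_k)\le A_k(r-\varepsilon)+B_k ,
\]
and this is $<r$ once $(A_k-1)r+B_k<A_k\varepsilon$, which holds for large $k$. This is exactly where the subtraction of $\varepsilon$ in condition~3 is used. Thus $x\in U_r(p_k)$. Again by the inverse condition, $f_k(g_k(y))\in U_{C_k}(y)$. If $x=p_k$, the modified value $p$ also satisfies $d(p,y)\le d(p,f_k(p_k))+d(f_k(p_k),y)<2C_k$. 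So $y$ lies in the $2C_k$-neighbourhood of $f(U_r(p_k))$, and $2C_k<\varepsilon$ for large $k$.

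Since the threshold $n_0$ may depend on $r$ and $\varepsilon$, these estimates give pointed convergence. The only delicate points are the base-point correction and the radius slack in condition~3. Both are handled by extracting $d(f_kg_k(y),y)<C_k$ from the nonstandard inverse condition, and by choosing $k$ so that $(A_k-1)r+B_k<\varepsilon$.
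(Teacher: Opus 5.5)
Your proof is correct and follows essentially the paper's construction: restrict the approximating quasi-isometry $X_k\to Y$ to the ball, redefine it at the base point, use $A_k\to1$, $B_k\to0$ to make the distortion on bounded sets vanish, and use the near-density of the image for condition~3. You also use the quasi-inverse $g_k$ to show that the preimages $g_k(y)$ of points $y\in U_{r-\varepsilon}(p)$ actually lie in $U_r(p_k)$; this makes explicit the radius-slack step that the paper's one-line appeal to ``$f_k$ is $2R_k$-dense'' leaves implicit.
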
 
\begin{proof}
    Since $\hat d (X_k, Y) \to 0,$ there exist maps $g_k \colon X_k \to Y,$ such that $A_k \to 1, B_k\to 0,$ $R_k\to 0$ where $A_k,$ $B_k$ are the constants from the quasi-isometric embedding formula \eqref{eq:qi-emb} for $g_k,$ and each $g_k$ is $R_k$-dense. Choose $p$ arbitrarily, and select $p_k$ such that \[d_Y(g_k(p_k), p) < R_k.\]
    Set $f_k (x) = g_k(x)$ for $x \neq p_k,$ $f_k(p_k) = p.$ Thus, $f_k$ satisfies condition 1. 
    Because $A_k \to 1,$ $B_k \to 0$, on the ball of radius $r$ we have $\operatorname{dis} \left.{f_k}\right|_{B_r(p_k)} \to 0.$ Condition \hyperref[cond: third pointed convergence condition]{3} holds because $f_k$ is $2R_k$-dense and $R_k \to 0.$
\end{proof}
\begin{corollary}
    Let \( X_k \xrightarrow{\rm q.i.} Y \).
If each space \( X_k \) possesses one of the following properties, then the limit space \( Y \) also possesses that property (provided \( Y \) is complete, if indicated):
\begin{enumerate}[\normalfont 1.]
    \item total boundedness;
\item boundedness of diameter; moreover, \( \operatorname{diam}(X_k) \to \operatorname{diam}(Y) \);
    \item separability;
\item properness of the space, if \( Y \) is complete;
\item the property of being a length space, if \( Y \) is complete;
\item properness and geodesicity of the space, if \( Y \) is complete;
    \item  \( \delta \)-hyperbolicity;
\item the \( \mathrm{CAT}^\kappa \) condition.
\end{enumerate}
That is, the metric $\hat d$ satisfies conditions 2–9 of Theorem~\ref{th:properties of GH limits}.
\label{cor:QI-inherited-properties}
\end{corollary}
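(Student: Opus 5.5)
The plan is to separate the listed properties into two groups. Items 3–8 (separability, properness, length space, proper geodesic, $\delta$-hyperbolicity, $\mathrm{CAT}^\kappa$) are local or can be tested on balls of finite radius. For these I would simply combine Proposition~\ref{prop:QI-->PGH} with the proposition stating that properties 3–8 of Theorem~\ref{th:properties of GH limits} survive pointed Gromov–Hausdorff convergence. Proposition~\ref{prop:QI-->PGH} gives base points $p_k\in X_k$, $p\in Y$ with $(X_k,p_k)\xrightarrow{GH}(Y,p)$, and the inheritance then follows immediately, with completeness of $Y$ assumed wherever it is required.

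The remaining items, total boundedness and boundedness with convergence of diameters, are not preserved under pointed convergence in general (Example~\ref{ex:punctured-conv}). I would handle them directly from the quasi-isometric maps, using the fact that bounded diameters make the multiplicative error additive. Fix $(1+r_k,r_k,r_k)$-quasi-isometries $f_k\colon X_k\to Y$, $g_k\colon Y\to X_k$ with $r_k\to0$.

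\textbf{Diameters.} For the upper bound I apply the embedding inequality for $g_k$: for $y_1,y_2\in Y$,
\[
d_Y(y_1,y_2)\le (1+r_k)\bigl(d_{X_k}(g_ky_1,g_ky_2)+r_k\bigr)\le (1+r_k)(\operatorname{diam}X_k+r_k).
\]
For the lower bound I use $f_k$:
\[
\operatorname{diam}X_k\le (1+r_k)(\operatorname{diam}Y+r_k).
\]
Together these give $\operatorname{diam}X_k\to\operatorname{diam}Y$. In particular, if the diameters of the $X_k$ stay bounded, then $Y$ is bounded. If only each $X_k$ is bounded, the first inequality applied at a single $k$ with finite $\hat d$ already gives boundedness of $Y$.

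\textbf{Total boundedness.} Given $\varepsilon>0$, I choose $k$ with $r_k$ small and take a finite $\varepsilon$-net $S\subset X_k$. I claim $f_k(S)$ is a finite $\bigl((1+r_k)\varepsilon+2r_k+\delta\bigr)$-net in $Y$. To see this, note that $f_k(X_k)$ is $r_k$-dense in $Y$ by the $C$-inverse condition, since $y$ is $r_k$-close to $f_kg_k(y)$. Then for any $y$, pick $x$ with $f_k(x)$ close to $y$, and $s\in S$ with $d(x,s)<\varepsilon$. The upper quasi-isometric bound gives
\[
d_Y(f_ks,f_kx)\le(1+r_k)\varepsilon+r_k.
\]
Letting $k\to\infty$ yields arbitrarily fine finite nets, so $Y$ is totally bounded.

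I expect the main obstacle to be the density step. The paper's $C$-inverse definition is phrased with neighbourhoods, so I would first check that it indeed implies $d_Y(y,f_kg_k(y))\le r_k$, which is the "standard" consequence noted in the remark after Definition~\ref{def: (A, B, C)-quasi-isometry}. Beyond that, I need to make sure Proposition~\ref{prop:QI-->PGH} delivers exactly the pointed convergence used in the earlier proposition. Everything else is routine.
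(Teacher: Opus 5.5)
Your proof is correct. For items 3--8 it is the same as the paper's argument: Proposition~\ref{prop:QI-->PGH} combined with the inheritance of these properties under pointed convergence.

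For items 1 and 2 you take a different route. The paper argues that, since the coarse structure is preserved, a bounded limit forces the $X_k$ to be eventually uniformly bounded. For uniformly bounded spaces, quasi-isometric convergence becomes Gromov--Hausdorff convergence (the multiplicative error turns into an additive one). The paper then invokes items 1--2 of Theorem~\ref{th:properties of GH limits}.

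You instead work directly with the maps. You use the two-sided estimates $\operatorname{diam}Y\le(1+r_k)(\operatorname{diam}X_k+r_k)$ and $\operatorname{diam}X_k\le(1+r_k)(\operatorname{diam}Y+r_k)$, and you push a finite $\varepsilon$-net forward along a single $f_k$.

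\begin{itemize}
\item Your version is self-contained and quantitative. It does not need the reduction to Gromov--Hausdorff convergence, which the paper only asserts. For total boundedness it needs just one sufficiently good $k$ at each scale.
\item The paper's version is shorter and reuses the existing Gromov--Hausdorff machinery. It also makes clear why exactly these two items need separate treatment beyond pointed convergence.
\end{itemize}

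The density step you flagged as the main obstacle is immediate. Since $g_k(y)\in U_\varepsilon(g_k(y))$, the $C$-inverse inclusion gives $d_Y(f_kg_k(y),y)<r_k$ directly. Your computation then yields a $\bigl((1+r_k)\varepsilon+2r_k\bigr)$-net, with no extra $\delta$ needed.
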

\begin{proof}
    Properties 3–8 hold because they hold for pointed convergence.
As noted earlier, under quasi-isometric convergence, the coarse structure is preserved.
Therefore, if $\operatorname{diam} X = \infty,$ then from some index onward $\operatorname{diam} X_k = \infty.$ If $\operatorname{diam} X < \infty,$ then from some index onward the diameters $X_k$ are finite, and on uniformly bounded spaces, quasi-isometric convergence is equivalent to Gromov--Hausdorff convergence.
Hence, items 1 and 2 remain valid. 
 \end{proof}

Thus, convergence with respect to $\hat d$ is a weaker convergence than Gromov--Hausdorff convergence, but it still preserves many important properties, i.e., it generalizes the Gromov--Hausdorff distance.

\section{Topology and coarse structure}
\label{sec:topol and coarse structure}

We now proceed to describe the topology and coarse structure induced by our distance.
We work with the set $\mathfrak{M}$ of equivalence classes of separable metric spaces under the relation of being at zero quasi-isometric distance. Note that by Proposition~\ref{prop: metric is well defined} the metric $\hat d$ is well-defined on equivalence classes. 

\subsection{Coarse structure} First, we recall the definition of a coarse structure.
\begin{definition}
A \emph{coarse structure} on a set $X$ is a family of subsets $\mathcal{E}$ of the Cartesian product $X \times X$ satisfying the following axioms:
\begin{enumerate}[\normalfont 1.]
\item The diagonal $\Delta_X = \{(x,x)\mid x\in X\}$ belongs to $\mathcal{E}.$
\item Symmetry:
If $E \in \mathcal{E}$, then $E^{t}=\{(y, x) \mid(x, y) \in E\}$ also belongs to $\mathcal{E}$.
\item Closure under finite unions: If $E_1, E_2, \ldots, E_n \in \mathcal{E}$, then $\bigcup_{i=1}^n E_i \in \mathcal{E}$.
\item Closure under taking subsets:
If $E \in \mathcal{E}$ and $F \subseteq E$, then $F \in \mathcal{E}$.
\item Closure under composition:
For any $E, F \in \mathcal{E}$ we have  $E \circ F \in \mathcal{E}$, where \begin{equation} E \circ F=\{(x, z) \mid \exists\, y \in X,(x, y) \in E \text{ and }  (y, z) \in F\}.\end{equation}
\end{enumerate}
\end{definition}

We provide simple examples of coarse structures.
\begin{example}
 The \emph{maximal coarse structure} consists of all subsets of $X\times X.$
    \end{example}
    \begin{example}
 The \emph{minimal coarse structure} consists of all subsets of $\Delta_X.$
    \end{example}
    \begin{example}
    Let $(X, d)$ be a metric space.
The system $\mathcal{E}_d$ consisting of sets $E\subseteq X\times X$ such that 
    \begin{equation} \exists \, r > 0 : E \subseteq E_r, \quad \textnormal{where} \quad E_r = \{(x, x') \mid d(x,x') \le r \}.
\label{eq:metric-coarse-structure} \end{equation}
    is a coarse structure. It is called the \emph{bounded metric coarse structure}.
\end{example}

Recall that $\mathfrak{M}$ is the set of equivalence classes of separable metric spaces.
By analogy with the metric coarse structure, we define the family  $\mathcal{E}_{\hat d}$.
That is, $E \in \mathcal{E}_{\hat d}$ if and only if it satisfies condition \eqref{eq:metric-coarse-structure} for $\hat d$.

\begin{proposition}   
The family $\mathcal{E}_{\hat d}$ defines a coarse structure on $\mathfrak{M}$.
\end{proposition}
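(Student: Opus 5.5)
We verify the five axioms directly for the family $\mathcal{E}_{\hat d}$, where $E \in \mathcal{E}_{\hat d}$ means $E \subseteq E_r = \{(X, X') \mid \hat d(X,X') \le r\}$ for some $r>0$. First note that $\hat d$ is well defined on $\mathfrak{M}$ by Proposition~\ref{prop: metric is well defined}, so the sets $E_r$ make sense as subsets of $\mathfrak{M}\times\mathfrak{M}$. The only property of a metric used in the classical proof that $\mathcal{E}_d$ is a coarse structure is the triangle inequality, and here we replace it by the generalised triangle inequality of Corollary~\ref{cor:generalised-triangle-equation}.

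The diagonal axiom follows from item 1 of Corollary~\ref{cor:generalised-triangle-equation}: $\hat d(X,X)=0$, so $\Delta_{\mathfrak{M}} \subseteq E_r$ for every $r>0$. Symmetry follows from item 2, since $\hat d(X,Y)=\hat d(Y,X)$ gives $E_r^t = E_r$; hence $E\subseteq E_r$ implies $E^t \subseteq E_r$. For finite unions, if $E_i \subseteq E_{r_i}$, then all $E_i \subseteq E_{\max r_i}$, because the sets $E_r$ increase with $r$. Closure under subsets is immediate from the definition.

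The only step needing an argument is composition. Suppose $E\subseteq E_r$ and $F\subseteq E_{r'}$, and let $(X,Z)\in E\circ F$. Then there is $Y$ with $\hat d(X,Y)\le r$ and $\hat d(Y,Z)\le r'$. Item 3 of Corollary~\ref{cor:generalised-triangle-equation} is stated for the exact values of $\hat d$, but the bound $2(s+s'+ss')$ is monotone in $s,s'$. It therefore gives $\hat d(X,Z)\le 2(r+r'+rr')$, so $E\circ F\subseteq E_{2(r+r'+rr')}$, and the threshold is finite.

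The main point of care is exactly this monotonicity. It comes directly from Proposition~\ref{prop:constants-of-composition}: for any $s>\hat d(X,Y)$ and $s'>\hat d(Y,Z)$, the composition yields a $(1+t,t,t)$-quasi-isometry with $t\le 2(s+s'+ss')$. Letting $s\downarrow\hat d(X,Y)$ and $s'\downarrow\hat d(Y,Z)$ gives the bound. Infinite values of $\hat d$ cause no trouble, since membership in $E_r$ already forces finiteness.
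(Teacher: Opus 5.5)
Your proof is correct and follows the same route as the paper: axioms 1--4 are immediate, and closure under composition comes from the generalised triangle inequality of Corollary~\ref{cor:generalised-triangle-equation}, which gives $E_{r}\circ E_{r'}\subseteq E_{2(r+r'+rr')}$. Your extra step justifying that this bound may be applied with upper bounds $r,r'$ rather than exact values (taking $s\downarrow\hat d(X,Y)$, $s'\downarrow\hat d(Y,Z)$ via Proposition~\ref{prop:constants-of-composition}) makes explicit a monotonicity that the paper's one-line argument uses implicitly.
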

\begin{proof}
    Conditions 1–4 are trivially satisfied. By Corollary~\ref{cor:generalised-triangle-equation}
    $$E_{r_1} \circ E_{r_2} \subseteq E_{2(r_1 + r_2 + r_1r_2)}, $$
    hence property 5 also holds.
\end{proof}

We now make a few remarks about the metrizability of coarse structures.
\begin{definition} A coarse structure $\mathcal{E}$ on a set $X$ is called \emph{metrizable} if there exists a generalized metric $d$ such that $\mathcal{E} = \mathcal{E}_d.$ 
\end{definition}

\begin{definition} Let $\mathcal{S} \subseteq 2^{X \times X}.$ The coarse structure \emph{generated} by $\mathcal{S}$ is the coarse structure obtained as the intersection of all coarse structures containing $\mathcal{S}.$
\end{definition}

\begin{theorem}[\!\!{\cite[Theorem 2.55]{Roe-en}}]
    A coarse structure is metrizable if and only if it is generated by some countable family of sets $\mathcal{S}.$
\end{theorem}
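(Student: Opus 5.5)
The plan is to prove the two implications separately. For each direction we first fix a precise meaning for metrizability, following the definition just given: a coarse structure $\mathcal{E}$ on $X$ is metrizable if $\mathcal{E}=\mathcal{E}_d$ for a generalized metric $d$ (infinite values allowed). For subsets $E,F$ of $X\times X$, composition and transposition are as in the axioms, and $\Delta_X$ is the diagonal.

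\emph{Metrizable $\Rightarrow$ countably generated.} Suppose $\mathcal{E}=\mathcal{E}_d$. I take $\mathcal{S}=\{E_n \mid n\in\mathbb N\}$, where $E_n=\{(x,x') \mid d(x,x')\le n\}$. Each $E_n$ lies in $\mathcal{E}_d$, so the coarse structure generated by $\mathcal{S}$ is contained in $\mathcal{E}_d$. Conversely, any $E\in\mathcal{E}_d$ satisfies $E\subseteq E_r\subseteq E_{\lceil r\rceil}$, so $E$ belongs to every coarse structure containing $\mathcal{S}$, by closure under subsets. Hence $\mathcal{E}_d$ is generated by $\mathcal{S}$. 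This direction is routine.

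\emph{Countably generated $\Rightarrow$ metrizable.} Let $\mathcal{S}=\{S_1,S_2,\dots\}$ generate $\mathcal{E}$. I first build an increasing exhausting sequence of symmetric controlled sets. Put $F_1=\Delta_X\cup S_1\cup S_1^t$, and
\[
F_{n+1}=F_n\cup (F_n\circ F_n)\cup S_{n+1}\cup S_{n+1}^t .
\]
Each $F_n$ is symmetric, contains the diagonal, and satisfies $F_n\circ F_n\subseteq F_{n+1}$. Let $\mathcal{E}'$ be the family of subsets of some $F_n$. Then $\mathcal{E}'$ is a coarse structure: unions are handled by monotonicity, and composition by $F_n\circ F_m\subseteq F_{\max(n,m)+1}$. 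It contains $\mathcal{S}$ and is contained in $\mathcal{E}$, so $\mathcal{E}'=\mathcal{E}$. Moreover, every $E\in\mathcal{E}$ lies in some $F_n$.

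Now define the weight $w(x,y)=\min\{n \mid (x,y)\in F_n\}$, with $w(x,x)=0$ and $w=\infty$ if $(x,y)$ lies in no $F_n$. Set
\[
d(x,y)=\inf\Bigl\{\textstyle\sum_{i} w(x_{i-1},x_i)\Bigr\},
\]
the infimum over chains $x=x_0,\dots,x_k=y$. This is a symmetric generalized pseudometric satisfying the triangle inequality, and $d(x,y)\ge 1$ for $x\ne y$, so it is a generalized metric. Since $d\le w$, we get $F_n\subseteq\{d\le n\}$, and hence $\mathcal{E}\subseteq\mathcal{E}_d$.

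The main obstacle is the reverse inclusion: each set $\{d\le r\}$ must lie in some $F_m$. A chain of total weight at most $r$ has at most $r$ nontrivial steps, each of weight at most $r$, so it lies in the $k$-fold composite $F_{\lfloor r\rfloor}^{\circ k}$ with $k\le r$. Using $F_n\circ F_n\subseteq F_{n+1}$ and induction, a composite of $2^j$ copies of $F_n$ lies in $F_{n+j}$. So the chain lies in $F_{\lfloor r\rfloor+\lceil\log_2 r\rceil+1}$, giving $\{d\le r\}\subseteq F_m$ for a suitable $m$, and therefore $\mathcal{E}_d\subseteq\mathcal{E}$.

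The point needing care is that the infimum over chains is attained only up to arbitrarily small loss. This is harmless because $w$ takes integer values, so $d$ is an infimum of integers and is actually attained. Then $d(x,y)\le r$ yields a genuine chain of weight at most $r$.
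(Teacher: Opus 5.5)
Your proof is correct, but it takes a longer route than needed. The paper gives no argument of its own and only cites Roe. The standard argument behind that result normalizes the generators exactly as you do: an increasing, exhausting sequence of symmetric sets $F_n\supseteq\Delta_X$ with $F_n\circ F_n\subseteq F_{n+1}$.

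Where you differ is the next step. The standard argument takes the weight $w$ itself as the metric, with no infimum over chains. If $x,y,z$ are pairwise distinct with $w(x,y)=m$ and $w(y,z)=n$ finite, monotonicity gives $(x,z)\in F_{\max(m,n)}\circ F_{\max(m,n)}\subseteq F_{\max(m,n)+1}$. Since $m,n\ge 1$, this yields $w(x,z)\le \max(m,n)+1\le m+n$, and the degenerate cases are trivial. Hence $\{w\le n\}=F_n$ for every integer $n\ge 1$, and both inclusions between $\mathcal{E}$ and $\mathcal{E}_w$ are immediate.

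In particular, your chain metric $d$ coincides with $w$: once $w$ is subadditive, no chain beats the single step. So three parts of your argument are a detour:
\begin{enumerate}
\item the infimum over chains,
\item the remark that the infimum is attained,
\item the doubling estimate $F_n^{\circ 2^j}\subseteq F_{n+j}$.
\end{enumerate}
Together they only recover $\{d\le r\}\subseteq F_{\lfloor r\rfloor+\lceil\log_2 r\rceil+1}$, whereas in fact $\{d\le r\}=F_{\lfloor r\rfloor}$ for $r\ge 1$.

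What your route buys is that the triangle inequality comes for free, for any symmetric weight, at the cost of controlling long chains. The direct argument instead spends one line on the triangle inequality and then needs no chain control at all.

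Two trivial edge cases deserve a word. A finite or empty generating family should be padded with $\emptyset$. In your last step, the case $r<1$ leaves $F_{\lfloor r\rfloor}$ undefined, but there $d(x,y)\le r$ forces $x=y$.
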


\begin{example}
    The coarse structure $\mathcal{E}_{\hat{d}}$ on $\mathfrak{M}$ is countably generated by the sets 
    $$E_n = \{(X, Y) \mid \hat{d}(X, Y) < n \}, $$ therefore, there exists a generalized metric that induces this coarse structure.
\end{example}

It is easy to give an explicit formula for a generalized metric that induces the same coarse structure as $\hat{d}.$

\begin{proposition}
    The distance $\rho = 2 + \ln( \hat d+1)$ for $\hat d \neq 0,$ and zero otherwise, is a generalized metric and induces the same coarse structure as $\hat d.$
\end{proposition}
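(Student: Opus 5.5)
We need to show that $\rho$ is a generalized metric and that $\mathcal E_\rho = \mathcal E_{\hat d}$. Symmetry is inherited from $\hat d$. Since $\hat d\ge 0$, we have $\rho\ge 0$. Moreover $\rho(X,Y)=0$ exactly when $\hat d(X,Y)=0$, because otherwise $\rho\ge 2$. So on $\mathfrak M$, where spaces at zero $\hat d$-distance are identified, $\rho$ separates points. By Proposition~\ref{prop: metric is well defined}, $\rho$ is well defined on equivalence classes. Infinite values occur exactly when $\hat d=\infty$, and this is allowed for a generalized metric.

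The main step is the triangle inequality $\rho(X,Z)\le\rho(X,Y)+\rho(Y,Z)$, which we prove from Corollary~\ref{cor:generalised-triangle-equation}. Put $r=\hat d(X,Y)$ and $r'=\hat d(Y,Z)$. We split into cases.

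\emph{Case $r=0$.} By Proposition~\ref{prop: metric is well defined}, $\hat d(X,Z)=\hat d(Y,Z)$, so $\rho(X,Z)=\rho(Y,Z)$ and the inequality holds. The case $r'=0$ is symmetric.

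\emph{Case $r,r'>0$ but $\hat d(X,Z)=0$.} Then $\rho(X,Z)=0$ and there is nothing to prove.

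\emph{Case all three distances positive.} If $r$ or $r'$ is infinite the right-hand side is infinite. Otherwise Corollary~\ref{cor:generalised-triangle-equation} gives
\[
1+\hat d(X,Z)\le 1+2(r+r'+rr')\le 2(1+r)(1+r').
\]
Hence
\[
\rho(X,Z)\le 2+\ln 2+\ln(1+r)+\ln(1+r').
\]
Since $\ln 2<2$, this is at most
\[
\bigl(2+\ln(1+r)\bigr)+\bigl(2+\ln(1+r')\bigr)=\rho(X,Y)+\rho(Y,Z).
\]
This absorption of the multiplicative constant $2$ by the additive constant is the only point needing care, and the choice of the constant $2$ makes it work.

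For the coarse structures, we compare the controlled sets, i.e.\ the families $E_r$ for each distance. The function $t\mapsto 2+\ln(1+t)$ is monotone and finite on finite values. It sends bounded sets to bounded sets, and $\rho\le s$ implies $\hat d\le e^{s}-1$. So every $\hat d$-bounded set $E_r^{\hat d}$ lies in $E^{\rho}_{2+\ln(1+r)}$, and every $\rho$-bounded set $E^\rho_s$ lies in $E^{\hat d}_{e^{s}-1}$. Pairs at distance zero for either function lie in the diagonal, so they cause no trouble. Therefore $\mathcal E_\rho=\mathcal E_{\hat d}$.
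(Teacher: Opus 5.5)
Your proof is correct and follows the paper's argument. The zero-distance cases are handled via Proposition~\ref{prop: metric is well defined}, and the main case combines Corollary~\ref{cor:generalised-triangle-equation} with $1+2(r+r'+rr')\le 2(1+r)(1+r')$ and $\ln 2<2$, exactly as in the paper. You also spell out the $\hat d(X,Z)=0$ case, the infinite values, and the comparison of bounded sets for the coarse structures, which the paper leaves implicit or calls obvious.
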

\begin{proof}
    Let $\hat{d}(X, Y) = r,$ $\hat{d} (Y, Z) = r'.$ If $r$ or $r'$ equals 0, then the other two distances coincide by Proposition~\ref{prop: metric is well defined}. Otherwise, by Corollary~\ref{cor:generalised-triangle-equation} 
    $$\rho(X, Z) \le \ln(2(r + r' + rr') +1) + 2 \le \ln (r + 1)(r' +1) + 4 = \rho(X, Y) + \rho(Y, Z).$$
  Thus, $\rho$ is a generalized metric. That they induce the same coarse structure is obvious.
\end{proof}
However, the constructed metric induces the discrete topology; thus, by passing to such a metric, we lose information about the convergence of spaces.

\subsection{Topology} We turn to the description of the topology on the set $\mathfrak{M}.$

\begin{definition}[Topology $\mathcal{T}_{\hat{d}}$]  A set $\mathfrak{U} \subseteq \mathfrak{M}$ is called \emph{open} if for each $X \in \mathfrak{U}$ and every sequence \( X_k \xrightarrow{\rm q.i.} X \), there exists $n_0$ such that $X_k \in \mathfrak{U}$ for $k\geq n_0$.
\end{definition}

\begin{proposition} A base for this topology is formed by the sets 
$$\displaystyle B_r(X) = \{ Y \mid \hat d(X, Y) < r \}.$$
\end{proposition}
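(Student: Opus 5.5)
The plan is to verify two things: each ball $B_r(X)$ is open in $\mathcal{T}_{\hat d}$, and every open set $\mathfrak U$ is a union of such balls. Together these show that the balls form a base, because the intersection of two balls is open and is therefore again a union of balls. Proposition~\ref{prop: metric is well defined} guarantees that $B_r(X)$ is well defined on equivalence classes.

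\emph{Every open set is a union of balls.} Let $\mathfrak U$ be open and $X\in\mathfrak U$. Suppose that no ball $B_{1/n}(X)$ is contained in $\mathfrak U$. Then for each $n$ we can pick $X_n\notin\mathfrak U$ with $\hat d(X,X_n)<1/n$. By the remark after the definition of $\hat d$, this means $X_n\xrightarrow{\rm q.i.}X$. Openness of $\mathfrak U$ then forces $X_n\in\mathfrak U$ for all large $n$, which is a contradiction. Hence some ball around $X$ lies in $\mathfrak U$, and $\mathfrak U$ is the union of these balls.

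\emph{Balls are open.} Let $Y\in B_r(X)$ and $Y_k\xrightarrow{\rm q.i.}Y$. We need $\hat d(X,Y_k)<r$ for all large $k$. Fix $s$ with $\hat d(X,Y)<s<r$, and take a $(1+s,s,s)$-quasi-isometry $\phi\colon X\to Y$, $\psi\colon Y\to X$. Also take $(1+\varepsilon_k,\varepsilon_k,\varepsilon_k)$-quasi-isometries $\alpha_k\colon Y\to Y_k$, $\beta_k\colon Y_k\to Y$ with $\varepsilon_k\to0$.

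Proposition~\ref{prop:constants-of-composition} cannot be applied directly here. Its inverse constant contains the summand $B+B'$, which would give roughly $2s$ rather than $s$. Instead I would rerun the argument of Proposition~\ref{prop: metric is well defined} with a fixed small $\varepsilon_k$ in place of the limit $\varepsilon\to0$. That argument makes the dependence quantitative.

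\begin{enumerate}[\normalfont 1.]
\item The embedding constants of $\alpha_k\phi$ and $\psi\beta_k$ are $1+s+O(\varepsilon_k)$ and $s+O(\varepsilon_k)$.
\item For the inverse constant in one order: if $y$ is $\delta_k$-close to $\alpha_k\phi(x)$, then $\beta_k(y)$ is $\varepsilon_k$-close to $\phi(x)$. Once $\varepsilon_k$ is below the $\varepsilon$ from the $s$-inverse condition for $\phi,\psi$, this gives $d_X(\psi\beta_k y,x)\le s$.
\item In the other order, inequality~\eqref{eq: inverse closeness inequality} gives the bound $(1+\varepsilon_k)s+2\varepsilon_k$.
\end{enumerate}

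Thus $X$ and $Y_k$ are $(1+s',s',s')$-quasi-isometric with $s'=s+O(\varepsilon_k)$. For large $k$ this gives $s'<r$, so $Y_k\in B_r(X)$.

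The main obstacle is this last step. One has to check carefully that the proof of Proposition~\ref{prop: metric is well defined} really yields an additive $O(\varepsilon)$ perturbation of all three constants, rather than something like $2s$. In particular, the $\varepsilon$ required in the definition of $C$-inverse for the composed maps must be chosen compatibly: take it smaller than both the $\delta_k$ for $\alpha_k,\beta_k$ and the value for which $\varepsilon_k$-closeness still activates the $s$-inverse condition for $\phi,\psi$.
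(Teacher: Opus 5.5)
Your proof is correct and has the same structure as the paper's: the step showing that an open $\mathfrak U$ contains some $B_{1/n}(X)$ around each of its points, argued by contradiction with a sequence $X_n\in B_{1/n}(X)\setminus\mathfrak U$, is exactly the paper's argument. The only difference is that the paper simply calls the openness of $B_r(X)$ obvious, while you correctly note that Corollary~\ref{cor:generalised-triangle-equation} only gives $\hat d(X,Y_k)\le 2(s+\varepsilon_k+s\varepsilon_k)\to 2s$, and you supply the needed quantitative rerun of the proof of Proposition~\ref{prop: metric is well defined}. Your constants $1+s+O(\varepsilon_k)$, $s+O(\varepsilon_k)$ and inverse radius $\max\{s,(1+\varepsilon_k)s+2\varepsilon_k\}$ check out, provided $\varepsilon_k$ is below the fixed radius from the $s$-inverse condition for $\phi,\psi$, and the radius for the composed pair is taken no larger than both that fixed radius and the radius $\delta_k$ for $\alpha_k,\beta_k$.
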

\begin{proof}  Obviously for each $r > 0$ and $X \in \mathfrak{M}$, the set $B_r(X)$ lies in $\mathcal{T}_{\hat d}.$ Let $\mathfrak{U}$ be an open subset of $\mathfrak{M},$ $X \in \mathfrak{U}.$ Then there exists $r > 0$ such that $B_r(X) \subseteq \mathfrak{U}.$ Indeed, otherwise one could find a sequence $X_k \in B_{\frac{1}{k}}(X)\backslash \mathfrak{U}$ with $X_k \to X.$ This contradicts the openness of $\mathfrak{U}.$
\end{proof}

\begin{remark}
    It is easy to see that for the topology and coarse structure on $\mathfrak{M}$ it is not important how $\hat d$ is exactly defined as a function of the constants $A, B, C$ from Definition~\ref{def: (A, B, C)-quasi-isometry}. What is important is that $X_k \to X$ if and only if $A_k \to 1,$ $B_k \to 0,$ $C_k \to 0$ as $k \to \infty, $ and $E \in \mathcal{E}_{\hat d}$ if there exist $A, B, C$ large enough such that for any pair $(X, Y) \in E,$ the spaces $X$ and $Y$ are $(A,B,C)$-quasi-isometric. Thus, the topology and coarse structure are naturally defined.  
\end{remark}

From the results of \cite{Chittenden-1917-en}, \cite{frink}, it follows that the topology on $\mathfrak{M}$ 
is metrizable. Also, with a minor modification, the results of \cite{Wright2011} apply here. We will not elaborate on this now, because in the next section we will present an explicit construction of a generalized metric that induces the same topology and coarse structure as $\hat d.$ 

Thus, we have defined on $\mathfrak{M}$ a topology and a coarse structure and have verified that each of them separately is metrizable.
We now proceed to prove that they are simultaneously metrizable.

\section{Correspondences and metrization}
\label{sec:correspondance-metrization} We now show how to define a metric equivalent to the quasi-isometric distance using the notion of correspondence (see Definition~\ref{def:distortion}). To simplify notation, we denote the distance between points \( x \) and \( x' \) by \( |x, x'| \). The metric space in which the distance is taken will be clear from the context or stated explicitly.
\begin{definition}
    The \emph{quasi-isometric distortion} of a correspondence $R$ is the number 
    \begin{multline}
        \operatorname{q-dis}R = \inf \bigg\{ r> 0 \, \bigg| \, \frac{1}{e^r}|y,y'|
- e^r + 1 \le|x,x'| \le e^r|y,y'| + e^{2r} - e^r \\  \text{ for all } (x,y), (x',y') \in R \bigg\}.
\end{multline}
\end{definition}

\begin{definition}
    Define the distance from $X$ to $Y$ by 
    \begin{equation}
        D(X, Y) = \inf_R\{\operatorname{q-dis} R\},
    \end{equation}
    where the infimum is taken over all correspondences $R$ between $X$ and $Y.$ 
\end{definition}
\begin{proposition}
    The constructed distance is a metric.
\label{prop:D is a metric}
\end{proposition}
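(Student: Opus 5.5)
The plan is to check the metric axioms directly from the definition of $\operatorname{q-dis}$, composing correspondences to get the triangle inequality. It helps to write $\lambda=e^r\ge 1$. The condition in the definition of $\operatorname{q-dis}R\le r$ then reads
\[
\tfrac{1}{\lambda}|y,y'|-(\lambda-1)\le |x,x'|\le \lambda|y,y'|+\lambda(\lambda-1)
\quad\text{for all } (x,y),(x',y')\in R .
\]
Since the set of admissible $r$ is upward closed, $\operatorname{q-dis}R\le r$ means that this holds for every $\lambda>e^{r}$.

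\emph{Reflexivity and symmetry.} For $X=Y$ I take the diagonal correspondence. Both inequalities then hold for every $\lambda\ge1$, so $D(X,X)=0$. For symmetry, let $R^t$ be the transposed correspondence. The right inequality gives $\frac1\lambda|x,x'|-(\lambda-1)\le|y,y'|$. The left inequality gives $|y,y'|\le\lambda|x,x'|+\lambda(\lambda-1)$. So the pair of conditions is invariant under swapping the roles of $X$ and $Y$. Hence $\operatorname{q-dis}R^t=\operatorname{q-dis}R$, and therefore $D(X,Y)=D(Y,X)$.

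\emph{Triangle inequality (the main step).} Let $R$ be a correspondence between $X$ and $Y$ satisfying the condition with $\lambda$, and $S$ one between $Y$ and $Z$ satisfying it with $\mu$. Their composition
\[
S\circ R=\{(x,z)\mid \exists y:\ (x,y)\in R,\ (y,z)\in S\}
\]
is again a correspondence. The goal is to show that it satisfies the condition with $\lambda\mu$. For $(x,z),(x',z')$ realized through $y,y'$, chaining the upper bounds gives
\[
|x,x'|\le \lambda\mu|z,z'|+\lambda\mu(\mu-1)+\lambda(\lambda-1).
\]
The required bound is $\lambda\mu(\lambda\mu-1)$. The difference between the two constants is $\lambda(\lambda-1)(\mu^2-1)\ge0$, so the upper bound holds.

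Chaining the lower bounds gives
\[
|x,x'|\ge \tfrac{1}{\lambda\mu}|z,z'|-\tfrac{\mu-1}{\lambda}-(\lambda-1).
\]
The required constant is $-(\lambda\mu-1)$. Here the difference is $(\mu-1)(\lambda-\lambda^{-1})\ge0$, so the lower bound holds as well.

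Taking $\lambda=e^{r}$ and $\mu=e^{s}$ with $r,s$ slightly above $\operatorname{q-dis}R$ and $\operatorname{q-dis}S$ shows $\operatorname{q-dis}(S\circ R)\le r+s$. Passing to infima over $R$ and $S$ gives $D(X,Z)\le D(X,Y)+D(Y,Z)$.

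The step I expect to need the most care is the choice of the additive constants $e^{2r}-e^r$ and $e^r-1$. The argument hinges on both constant comparisons coming out as nonnegative products, so these two algebraic identities are the heart of the proof.

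\emph{Well-definedness on $\mathfrak M$.} Finally, $D$ must be well defined on $\mathfrak M$. Once the triangle inequality is established, it suffices that $D(Y_1,Y_2)=0$ whenever $\hat d(Y_1,Y_2)=0$. I would deduce this from the comparison $\hat d=r\Rightarrow D\le\ln(1+2r)$, proved by turning a $(1+r,r,r)$-quasi-isometry pair $f,g$ into the correspondence $R_{f,g}$. The converse estimate gives $D(X,Y)=0\Rightarrow\hat d(X,Y)=0$, i.e.\ positivity on equivalence classes. If this comparison is only proved afterwards, I would instead state the proposition as asserting a (generalized) pseudometric at this point.
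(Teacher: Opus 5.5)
Your proposal is correct and follows the paper's argument: you compose the correspondences and compare the additive constants, and your identity $\lambda(\lambda-1)(\mu^2-1)\ge 0$ is exactly the paper's inequality $e^{2r}-e^r\le e^{2s}(e^{2r}-e^r)$. You do more than the paper in three places, all of which it leaves implicit: you check the lower-bound chain and the symmetry of the defining condition explicitly, you note that $D$ may take the value $+\infty$, and you point out that well-definedness and definiteness on $\mathfrak{M}$ rest on the comparison with $\hat d$ from Proposition~\ref{prop:metric-correspondence}.
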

\begin{proof}
    Reflexivity and symmetry are trivial. Let us check the triangle inequality.
Suppose $D(X, Y) = r_0,$ $D(Y, Z) = s_0.$ Then there exists a correspondence $R$ between $X$ and $Y$ with $\operatorname{q-dis} R = r < r_0 + \varepsilon$ and a correspondence $S$ between $Y$ and $Z$ with $\operatorname{q-dis} S = s < s_0 + \varepsilon.$ Define a correspondence between $X$ and $Z$ by 
    \[
        T = \{(x, z) \mid \exists \, y : (x, y) \in R, (y, z) \in S \}.
\]
    Now we estimate its quasi-isometric distortion. 
    \[
        |x,x'|
\le e^{r}|y,y'| + e^{2r} - e^r \le e^{r+s}|z,z'| + e^{r}(e^{2s} - e^{s}) + e^{2r} - e^r
    \]
    It is easy to see that 
    \[
        e^{r}(e^{2s} - e^{s}) + e^{2r} - e^r \le e^{2r + 2s} - e^{r + s}.
\]
    Indeed, by rearranging and grouping terms, we see that this is equivalent to the inequality
    \[
        e^{2r} - e^r \le e^{2s} (e^{2r} - e^r).
\]
    The reverse inequalities are established analogously.
Thus, $\operatorname{q-dis} T \le r + s,$ and because $\varepsilon$ is arbitrary, we get $D(X, Z) \le r_0 + s_0.$
\end{proof}

We now show that $\operatorname{q-dis}R$ induces the same topology and coarse structure as the quasi-isometric distance.
\begin{proposition} Let $X,$ $Y$ be metric spaces. \begin{enumerate}[\normalfont 1.]
\item If $\hat d(X, Y) = r,$ then $D(X, Y) \le \ln(1 + 2r);$ 
\item If $D(X, Y) = r,$ then $\hat d(X, Y) \le e^{2r} - e^r.$
Thus, the topologies and coarse structures induced by $\hat d$ and $D$ coincide.
\end{enumerate} \label{prop:metric-correspondence}
\end{proposition}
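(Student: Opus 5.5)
Both parts go by converting between a quasi-isometry pair $(f,g)$ and the correspondence $R=R_f\cup R_g$ (graphs of $f$ and of $g$, as in the example with $R_{\phi\psi}$). Everything rests on the substitution $A=e^{s}$, $B=e^{2s}-e^{s}$: the condition $\operatorname{q-dis}R\le s$ says exactly that for all $(x,y),(x',y')\in R$
\[
\tfrac1A|y,y'|-(A-1)\le |x,x'|\le A|y,y'|+B .
\]

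\textbf{Part 1.} Fix $r'>r=\hat d(X,Y)$ and take $C$-inverse $(1+r',r')$-quasi-isometric embeddings $f\colon X\to Y$, $g\colon Y\to X$ with $C=r'$. Put $R=R_f\cup R_g$. We must check the two-sided inequality for three types of pairs of points of $R$.
\begin{enumerate}[1.]
\item Pairs $(x,fx),(x',fx')$: inequality \eqref{eq:qi-emb} for $f$ gives $|x,x'|\le (1+r')|fx,fx'|+(1+r')r'$ and $|x,x'|\ge \frac{1}{1+r'}|fx,fx'|-r'$.
\item Pairs $(gy,y),(gy',y')$: the same estimates follow directly from \eqref{eq:qi-emb} for $g$.
\item Mixed pairs $(x,fx),(gy,y)$: the $C$-inverse condition gives $|gfx,x|\le r'$ (apply it at $f(x)$ itself). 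Then the triangle inequality gives
\[
|x,gy|\le |gfx,gy|+r'\le (1+r')|fx,y|+2r',
\]
and symmetrically $|fx,y|\le (1+r')|x,gy|+(1+r')r'+r'$. This yields bounds of the same shape.
\end{enumerate}
Finally choose $s=\ln(1+2r')$, so $A=1+2r'$, $A-1=2r'$ and $B=(1+2r')2r'$. Each additive constant found above is at most $2r'$, or at most $(1+r')r'+r'\le B$ after dividing by the multiplicative factor where needed, and each multiplicative factor is at most $1+2r'$. Hence $\operatorname{q-dis}R\le \ln(1+2r')$; letting $r'\to r$ gives $D(X,Y)\le\ln(1+2r)$. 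The only real work is tracking the mixed-pair lower bound carefully enough that the constant stays $\le 2r'$.

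\textbf{Part 2.} Given $R$ with $\operatorname{q-dis}R\le s<r+\varepsilon$, choose $f(x)$ to be some $y$ with $(x,y)\in R$ and $g(y)$ to be some $x$ with $(x,y)\in R$. The q-dis inequalities give that $f$ and $g$ are $(e^s,\,e^{2s}-e^s)$-quasi-isometric embeddings, since $e^s-1\le e^{2s}-e^s$. Put $t=e^{2s}-e^s$, so $e^s\le 1+t$ because $e^s-1\le t$. This gives the $(1+t,t)$ part of the definition of $\hat d$.

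The main obstacle is the $C$-inverse condition with $C=t$ in the nonstandard form of Definition~\ref{def: (A, B, C)-quasi-isometry}. Let $|y,fx|<\delta$. Both $(x,fx)$ and $(gy,y)$ lie in $R$, so the upper q-dis inequality gives
\[
|gy,x|\le e^s|y,fx|+e^{2s}-e^s<e^s\delta+t .
\]
This is not $\le t$ on the nose. To absorb the extra $e^s\delta$, use slack in $s$: work with $s'$ slightly larger than $s$ and $t'=e^{2s'}-e^{s'}$, pick $\delta$ so small that $e^s\delta+t\le t'$, and then let $s'\to r$. The other composition is handled symmetrically. This gives $\hat d\le t'$ for every $t'>e^{2r}-e^r$, hence $\hat d(X,Y)\le e^{2r}-e^r$.

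The coincidence of the topologies and coarse structures follows at once from these two bounds. Both bounding functions are monotone, finite-to-finite, and tend to $0$ at $0$, so small $\hat d$-balls and small $D$-balls are nested in each other, and bounded sets for one distance are bounded for the other.
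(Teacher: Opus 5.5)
Your proof is correct and follows the paper's argument. In Part 1 you use the correspondence $\Gamma_f\cup\Gamma_g^T$ with the triangle inequality through $gf(x)$ for mixed pairs, and in Part 2 you pick $f,g$ with graphs in $R$ and use slack ($t'>t$, the paper's $C>B$) to meet the open-ball $C$-inverse condition; your constant tracking in Part 1 is in fact slightly more careful than the paper's, since on the graph of $f$ the lower quasi-isometric bound only gives $|x,x'|\le (1+r')|fx,fx'|+(1+r')r'$, which, as you check, is still absorbed by $e^{2s}-e^{s}=2r'(1+2r')$.
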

\begin{proof}
Indeed, suppose there is a correspondence $R$ between the spaces $X$ and $Y$ with parameters $(A, B).$ That is, for any pairs $(x,y), (x', y') \in R$, we have 
$$\frac{|y,y'|}{A} - \frac{B}{A} \le |x,x'|
\le A|y,y'| + B.$$
 By definition, for a correspondence, we have $\pi_X R = X,$ $\pi_Y R =  Y,$ where $\pi_X, \pi_Y$ are the projections from the Cartesian product $X \times Y$ onto the corresponding factor.
Hence, there exist maps $f\colon X \to Y,$ $g\colon Y \to X,$ such that $\Gamma_f,$ $\Gamma_g^T \subseteq R.$ Since $R$ is an $(A, B)$-correspondence, $f,$ $g$ are $(A,B)$-quasi-isometric embeddings.
Let us check that $f$ and $g$ are $C$-inverse for any $C > B$. Fix $\varepsilon > 0$ such that $A\varepsilon + B < C$.  If $y$ is $\varepsilon$-close to $f(x),$ then 
$$ |gy,x| \le A |fx, y| + B \le C.  $$ 
Therefore, $X$ and $Y$ are $(A, B, C)$-quasi-isometric.
The estimate for the second composition is completely analogous. Thus, from $D(X, Y) = r$, it follows that
 \[
 \hat d (X, Y) \le \max\{e^r - 1, e^{2r} - e^r\} = e^{2r} - e^r.
\]

 Conversely, let $X$ and $Y$ be $(A, B, C)$-quasi-isometric spaces.
That is, there exist $(A, B)$-quasi-isometric embeddings $f\colon X\to Y,$ $g \colon Y \to X$ and $f, g$ are $C$-inverse.
Define 
 \[ R = \Gamma_f \cup \Gamma_g^T \]
 and estimate $\operatorname{q-dis}R.$ Let $(x, y), (x', y') \in R.$ If  $(x, y), (x', y')$ lie in the graph of one of the functions, then 
 \[ \frac{1}{A}|x,x'|
- \frac{B}{A} \le |y,y'| \le A|x,x'| + B,  
 \]
 because the functions are $(A, B)$-quasi-isometric embeddings.
If $(x, y) \in \Gamma_f,$ $(x', y') \in \Gamma_g^T,$ then 
 \[ |x,x'| = |x, g(y')| \le |gfx, gy'|
+ |x, gfx| = |gy, gy'| + C \le A|y,y'| + B + C. \]
The estimate for $|yy'|$ is done similarly.
Thus, we have found an $(A, B + C)$-correspondence between $X$ and $Y.$ Hence, if $\hat d(X, Y) = r,$ then 
 \[ e^{D(X, Y)} \le 1 + r, \quad e^{2D(X, Y)} - e^{D(X, Y)} \le 2r, \]
 whence $D(X, Y) \le \ln(1 + 2r).$

 Therefore, both distances induce the same topology and coarse structure.
\end{proof}

\subsection{Continuous deformations of metric spaces}
We now show that any two quasi-isometric metric spaces can be connected by a continuous path of finite length.
Let $R$ be a correspondence between metric spaces $X$ and $Y.$ Denote the set $R$ equipped with the metric 
\begin{equation}|(x,y), (x',y')|_t = (1-t)|xx'|
+ t|yy'|
\end{equation}
by $R_t.$  For $t = 0$ and $t = 1$, one generally obtains pseudometric spaces.
But we identify them with the corresponding metric spaces obtained by identifying points at distance zero.
Then $ X = R_0$, $Y = R_1.$ This construction of a one-parameter family was proposed in \cite{Ivanov_2016}.

In the next theorem we obtain a bound on the length of curves, constructed by correspondences. We don't claim that the obtained bound is tight. 
\begin{theorem} Let $X$ and $Y$ be metric spaces, and let $R$ be a correspondence between $X$ and $Y$ with $\operatorname{q-dis}R \le r.$ Then the map $t \mapsto R_t$ is a continuous path in $(\mathfrak{M}, D)$ connecting $X$ and $Y,$ and its length does not exceed $e^{2r} - e^{r}.$ 
\label{th:Rt-curve}
\end{theorem}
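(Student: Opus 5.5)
The plan is to compare two members $R_s$ and $R_t$ of the family directly, using the tautological correspondence. The underlying set of every $R_t$ is $R$ itself, so the diagonal $\Delta=\{(p,p)\mid p\in R\}$ is a correspondence between $R_s$ and $R_t$. By definition of $D$,
\[
D(R_s,R_t)\le \operatorname{q-dis}\Delta .
\]
The whole argument therefore reduces to a Lipschitz-type estimate $D(R_s,R_t)\le K|s-t|$ with $K=e^{2r}-e^{r}$. Such an estimate gives continuity of $t\mapsto R_t$ at once. It also bounds the length: for any partition $0=t_0<\dots<t_n=1$ the sum $\sum D(R_{t_{i-1}},R_{t_i})$ is at most $K$, so the length is at most $K$.

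\emph{Endpoints.} Before the estimate, I will check that $R_0$ and $R_1$ are $X$ and $Y$. The metric quotient of $R_0$ is isometric to $X$ via $(x,y)\mapsto x$, because $\pi_X R=X$; similarly $R_1$ is isometric to $Y$. So the path joins $X$ to $Y$ in $\mathfrak M$. If separability is needed for membership in $\mathfrak M$, I will remark that a countable dense subset of $R_t$ can be taken from pairs over countable dense sets of $X$ and $Y$, adjusted by the quasi-isometric control.

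\emph{Core estimate.} Fix $(x,y),(x',y')\in R$ and write $a=|x,x'|$, $b=|y,y'|$, $d_t=(1-t)a+tb$, $c=e^r-1$ and $K=e^{2r}-e^r=e^r c$.

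The hypothesis $\operatorname{q-dis}R\le r$ (passing to $r+\varepsilon$ and letting $\varepsilon\to0$ if the infimum is not attained) gives $a\le e^r b+K$. Rearranging the lower inequality $e^{-r}b-e^r+1\le a$ gives $b\le e^r a+K$. So the condition is symmetric in $a$ and $b$, and therefore
\[
|b-a|\le c\min(a,b)+K\le c\,d_t+K ,
\]
since $d_t\ge\min(a,b)$. With $h=|s-t|$ we get
\[
d_s=d_t+(s-t)(b-a)\le (1+hc)\,d_t+hK ,
\]
and symmetrically $d_t\le(1+hc)d_s+hK$, which is the required lower bound.

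Now choose $\rho$ with $e^\rho=1+hK$. Since $K\ge c$, we have $e^\rho\ge 1+hc$. We also have $e^{2\rho}-e^{\rho}=e^\rho\cdot hK\ge hK$. Hence the diagonal has quasi-isometric distortion at most $\rho$, and
\[
D(R_s,R_t)\le\ln(1+K|s-t|)\le K|s-t| .
\]

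\emph{Main obstacle.} The delicate point is matching the two-parameter shape of $\operatorname{q-dis}$, where the additive constant is tied to the multiplicative one as $e^{2\rho}-e^{\rho}$. A naive choice $e^\rho=1+hc$ fails for small $h$: it makes the additive term only of order $hc$, whereas we need $hK$. The fix is to inflate $\rho$ so that $e^\rho-1=hK$, which still keeps $\rho=O(h)$. This is exactly what produces the constant $e^{2r}-e^r$ in the statement.
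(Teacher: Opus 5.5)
Your argument is correct and is essentially the paper's: you use the same diagonal (identity) correspondence between $R_s$ and $R_t$ and derive the same key bound $d_s\le\bigl(1+|s-t|(e^r-1)\bigr)d_t+|s-t|(e^{2r}-e^r)$, together with its symmetric counterpart. The only difference is in the finish. Your single choice $e^{\rho}=1+(e^{2r}-e^r)|s-t|$ gives the global Lipschitz bound $D(R_s,R_t)\le(e^{2r}-e^r)|s-t|$, which is slightly cleaner than the paper's asymptotic estimate $\max(\Delta_1,\Delta_2)=\delta(e^{2r}-e^r)+o(\delta)$ followed by refining partitions.
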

\begin{proof}
First, we show continuity.
Let $t, s \in [0,1]$, $|t - s| = \delta$.
The graph of the identity map $\operatorname{id} \colon R_t \to R_s$ gives a correspondence between $R_t$ and $R_s.$ We estimate its distortion.
\begin{multline*}
|(x,y), (x',y')|_s = (1-s)|x,x'| + s|y,y'| = (1-t)|x,x'| + t|y,y'|
+ (s-t) (|y,y'| - |x,x'|) \le \\ \le |(x,y), (x',y')|_t + \delta ||x,x'| - |y,y'||
\end{multline*}
We estimate the term with $\delta$ separately; without loss of generality assume $|x,x'| > |y,y'|.$
\begin{align*}
 |x,x'| - |y,y'| = t|x,x'|
+ (1-t)|x,x'| - t|y,y'| - (1-t)|y,y'| \le \\
\le t e^r |y,y'| + t(e^{2r} - e^r) + (1-t)|x,x'|
-   t|y,y'| - \frac{(1-t)}{e^r}|xx'| + (1-t)(e^r - 1)  =\\
= t(e^r - 1)|y,y'| + (1-t)(1 - \frac{1}{e^r})|x,x'|
+ (1-t) (e^{r}-1) + t(e^{2r} - e^r) \le\\
(e^r -1) |(x,y), (x', y')|_t + (e^{2r} - e^r)
\end{align*}
Thus, 
$$|(x,y), (x',y')|_s \le (1 + \delta (e^r - 1))|(x,y), (x',y')|_t + \delta(e^{2r} - e^r).$$
Hence as $\delta \to 0,$ the distortion of $\operatorname{id}\colon R_t \to R_{t+\delta}$ tends to $0,$ so the curve is continuous.
Now, we write asymptotic estimates that will help us compute the length of the curve.
Let $\Delta = \operatorname{max}(\Delta_1, \Delta_2),$ where $\Delta_1, \Delta_2$ are determined from the equations
$$e^{\Delta_1} = 1 + \delta (e^r - 1), \quad e^{2\Delta_2} - e^{\Delta_2} = \delta(e^{2r} - e^r).$$
Clearly $D(R_t, R_s) \le \Delta.$ As $\delta \to 0,$ we have 
$$\Delta_1 = \delta(e^{r} - 1) + o(\delta), \quad \Delta_2 = \delta(e^{2r} - e^r) + o(\delta).$$
Therefore,
$\Delta =  \delta(e^{2r} - e^r) + o(\delta).$
Using the inequality $D(R_t, R_s) \le \Delta$ and passing to sufficiently fine partitions of the curve, we obtain that the length of the curve does not exceed $e^{2r} - e^{r}.$ 
\end{proof}

\begin{corollary}
The set of equivalence classes $\mathfrak{M}$ of separable metric 
spaces is monogenic.  
\end{corollary}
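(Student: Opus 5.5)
The plan is to show that the bounded coarse structure $\mathcal{E}_D$ on $\mathfrak{M}$ is generated by the single entourage $E_1=\{(X,Y)\mid D(X,Y)\le 1\}$. Recall that a coarse structure is \emph{monogenic} if it is generated by one set. By Proposition~\ref{prop:metric-correspondence}, $\mathcal{E}_D=\mathcal{E}_{\hat d}$, so it suffices to work with $D$.

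The set $E_1$ lies in $\mathcal{E}_D$. Hence the coarse structure generated by $E_1$ is contained in $\mathcal{E}_D$. Conversely, every element of $\mathcal{E}_D$ is a subset of some $E_r=\{(X,Y)\mid D(X,Y)\le r\}$. So it is enough to prove the inclusion
\[
E_r\subseteq E_1^{\circ n}\qquad\text{for some } n=n(r)\in\mathbb N .
\]
Pairs at infinite distance do not occur in any $E_r$, so nothing needs to be done about them.

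The key step is Theorem~\ref{th:Rt-curve}. Take $(X,Y)$ with $D(X,Y)\le r$. By the definition of $D$ as an infimum, there is a correspondence $R$ with $\operatorname{q-dis}R\le r+1$. The theorem then gives a continuous path $t\mapsto R_t$ from $X$ to $Y$ in $(\mathfrak{M},D)$ of length $L\le e^{2(r+1)}-e^{r+1}$. Set $n=\lceil L\rceil+1$, which depends only on $r$. Using continuity, choose a partition $0=t_0<t_1<\dots<t_n=1$ such that each subarc has length at most $1$. Consecutive points then satisfy $D(R_{t_{i}},R_{t_{i+1}})\le 1$, because distance is bounded by the length of any connecting path. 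Therefore $(X,Y)\in E_1^{\circ n}$. This partition exists because length is additive and continuous along a rectifiable path.

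Some bookkeeping checks that the path really lies in $\mathfrak{M}$. The metric quotient of $R_0$ is isometric to $X$, since $\pi_X R=X$ and $|(x,y),(x',y')|_0=|x,x'|$; similarly the quotient of $R_1$ is isometric to $Y$. Each $R_t$ is separable: its metric is dominated by the sum metric on $X\times Y$, and any subset of a product of two separable spaces is separable under that metric.

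The main (mild) obstacle is purely formal. The bound in Theorem~\ref{th:Rt-curve} requires a correspondence realizing $\operatorname{q-dis}$ close to $D$, not $D$ itself; replacing $r$ by $r+1$ handles this uniformly in $r$. I would also state explicitly the elementary fact that the $D$-distance between the endpoints of an arc is at most its length. This holds because $D$ is a metric, by Proposition~\ref{prop:D is a metric}.
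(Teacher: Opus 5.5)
Your proposal is correct and follows the same route as the paper: both use Theorem~\ref{th:Rt-curve} to cut the path $t\mapsto R_t$ into a number of pieces bounded in terms of $r$, each of $D$-length at most $1$, which gives $E_r\subseteq E_1\circ\dots\circ E_1$. Your passage to a correspondence with $\operatorname{q-dis}R\le r+1$ makes explicit the non-attainment of the infimum, which the paper glosses over; to make $n$ genuinely depend only on $r$, take $n=\lceil e^{2(r+1)}-e^{r+1}\rceil+1$ rather than $\lceil L\rceil+1$, since $L$ depends on $R$.
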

\begin{proof}
The coarse structure is generated by the set $E_1 = \{(X, X') \mid D(X, X') \le 1\}.$ Indeed, from the fact that any two metric spaces at distance at most $r$ can be connected by a curve of length at most $e^{2r} - e^r,$ it follows that 
$E_r \subseteq E_1 \circ \dots \circ E_1,$
where the right‑hand side is the composition of $k$ sets with $k > e^{2r} - e^r.$
\end{proof}

The above reasoning allows one to pass to an intrinsic metric, i.e., a metric defined as the infimum of lengths of curves between points.
From the estimates on curve lengths, it follows that this generalized metric will induce the same topology and coarse structure as $D,$ and hence as $\hat d.$

\begin{example} Let $E_p = (\mathbb R^n, \|\cdot \|_p)$ denote the $n$-dimensional normed space with the norm
$$ \|x \|_p = \sqrt[p]{|x_1|^p + \dots + |x_n|^p }. $$
Consider the curve $\gamma(p) = E_p,$ where $1 \le p \le \infty.$ We will show that this curve is a geodesic (though not naturally parametrized) on the interval $[1,2]$ and on the ray $[2, +\infty)$. Note that for $n = 2$ their union forms a closed loop (since $E_1$ is isometric to $E_\infty$), hence the union will not be a geodesic.

Recall that the Banach--Mazur distance between isomorphic Banach spaces $X$ and $Y$ is defined by
\begin{equation} D_{BM} (X, Y) = \inf_{T} \ln \max\{\|T\|, \| T^{-1}\| \},\end{equation}
where the infimum is taken over all isomorphisms $T \colon X \to Y.$ 

We now explain why the distance $D(X, Y)$ between finite-dimensional vector spaces $X$ and $Y$ of the same dimension equals the Banach--Mazur distance. First, observe that the space $\lambda X := (X, \lambda \|\cdot \|_X)$ is isometric to $X.$ Consequently, $X$ coincides with its asymptotic cone $\operatorname{ascone} X$. Moreover, to every $(A, B, C)$-quasi-isometry $\tilde f \colon X \to Y$ corresponds a bi-Lipschitz homeomorphism $ f \colon \operatorname{ascone} X \to \operatorname{ascone} Y,$ with Lipschitz constants $\operatorname{L} (f)$ and $\operatorname{L}(f^{-1})$ not exceeding $A$ \cite[Lemma 10.83]{dructu2018geometric}.  Since $f$ and $f^{-1}$ are Lipschitz, they are differentiable almost everywhere, there exists a point $x_0$ of differentiability s.t. the derivative $df_{x_0}$ is a linear isomorphism between $X$ and $Y$ with $\operatorname{L}(df_{x_0}) \le \operatorname{L}(f) \le A.$ Thus, from each $(A, B, C)$-quasi-isometry we obtain a linear isomorphism with norm at most $A.$ In Proposition~\ref{prop:metric-correspondence} we showed that every $(A, B)$-correspondence gives rise to a pair of quasi-isometries with the same parameter $A,$ which yields the required equality.    

It is known \cite[Proposition 37.6]{tomczak1989banach} that for $1 \le p \le q \le 2,$ and for $2 \le p \le q \le \infty,$ 
\begin{equation} D_{BM}(E_p, E_q) = \left(\frac{1}{p} - \frac{1}{q}\right) |\ln n|,\end{equation}
and this value is attained by the identity map. 

Considering a broken line $E_{p_1}, \dots, E_{p_k}$ with $p_1 < p_2<\dots < p_k,$ we see that 
$$D(E_{p_1}, E_{p_2}) + \dots + D(E_{p_k-1}, E_{p_k}) = \left(\frac{1}{p_1} - \frac{1}{p_k}\right)| \ln n| = D(E_{p_1}, E_{p_k}). $$
Hence, the supremum of the lengths of broken lines equals the distance, so the curve is a geodesic. 

Now consider the curve $t \mapsto R_t,$ where $R_t$ denotes the set $\mathbb R^n$ endowed with the norm defined by $(1-t)\|x \|_1 + t \| x \|_2.$ 
Fix $s < t$; then 
$$D_{BM}(R_t, R_s) \le \ln  \frac{(1-t) + tM}{(1-s) + sM},$$ 
where $\ln M = D_{BM}(R_0, R_1) = D_{BM}(E_1, E_2).$ 
Observe that 
\begin{align*} D_{BM}(E_1, E_2) &\le D_{BM}(R_{t_0}, R_{t_1}) + \dots + D_{BM}(R_{t_{n-1}}, R_{t_{n}}) \\ 
&= \ln \left( \dfrac{1-t_1 + t_1 M}{1} \cdot \dfrac{1-t_2 + t_2 M}{1-t_1 + t_1 M}\cdot \dots \cdot \dfrac{M}{1-t_{n-1} + t_{n-1}M} \right) \\
&= \ln M = D_{BM}(E_1, E_2). \end{align*}
Therefore, the curve $t \mapsto R_t$ is also a geodesic.
\end{example}

Thus, the estimates obtained in the previous section are not optimal in general.

\end{document}